\title{The Elekes-Szab\'{o} Problem and the Uniformity Conjecture}
\newcommand{\E}{\mathsf{E}}
\newcommand{\Q}{\mathbb{Q}}
\newcommand{\R}{\mathbb{R}}
\newcommand{\cC}{\mathcal{C}}
\author{Mehdi Makhul, Oliver Roche-Newton, Sophie Stevens and Audie Warren}
\newtheorem*{corollary*}{Corollary 5}
\newtheorem{definition}{Definition}
\newtheorem{theorem}{Theorem}
\newtheorem{corollary}{Corollary}
\newtheorem{conjecture}{Conjecture}
\newcommand{\Addresses}{{
  \bigskip
  \footnotesize

  M. Makhul, \textsc{Fakult\"{a}t f\"{u}r Mathematik Universit\"{a}t Wien, Austria}\par\nopagebreak
  \textit{E-mail address}, \texttt{mmakhul@risc.jku.at}

  \medskip

  O. Roche-Newton, \textsc{Johann Radon Institute for Computational and Applied Mathematics (RICAM), Linz, Austria}\par\nopagebreak
  \textit{E-mail address}, \texttt{o.rochenewton@gmail.com}

  \medskip

   S. Stevens, \textsc{Johann Radon Institute for Computational and Applied Mathematics (RICAM), Linz, Austria}\par\nopagebreak
  \textit{E-mail address}, \texttt{sophie.stevens@ricam.oeaw.ac.at}
  
  \medskip

   A. Warren, \textsc{Johann Radon Institute for Computational and Applied Mathematics (RICAM), Linz, Austria}\par\nopagebreak
  \textit{E-mail address}, \texttt{audie.warren@oeaw.ac.at}

}}
\begin{document}
 \maketitle

 \begin{abstract}
    In this paper we give a conditional improvement to the Elekes-Szab\'{o} problem over the rationals, assuming the Uniformity Conjecture. Our main result states that for $F\in \mathbb{Q}[x,y,z]$ belonging to a particular family of polynomials, and any finite sets $A, B, C \subset \mathbb Q$ with $|A|=|B|=|C|=n$, we have
    \[
    |Z(F) \cap (A\times B \times C)| \ll n^{2-\frac{1}{s}}.
    \]
    The value of the integer $s$ is dependent on the polynomial $F$, but is always bounded by $s \leq 5$, and so even in the worst applicable case this gives a quantitative improvement on a bound of Raz, Sharir and de Zeeuw \cite{RazSharirdeZeeuw}.
    
    We give several applications to problems in discrete geometry and arithmetic combinatorics. For instance, for any set $P \subset \mathbb Q^2$ and any two points $p_1,p_2 \in \mathbb Q^2$, we prove that at least one of the $p_i$ satisfies the bound
    \[
    | \{ \| p_i - p \| : p \in P \}| \gg |P|^{3/5},
    \]
    where $\| \cdot \|$ denotes Euclidean distance. This gives a conditional improvement to a result of Sharir and Solymosi \cite{SS}.
 \end{abstract}
 \section{Introduction}

A recurring theme in arithmetic combinatorics is the principle that ``additive and multiplicative structure cannot coexist". This has been quantified in many ways, but perhaps the most general approach is the study of \emph{expanders}\footnote{The precise meaning of the term \textit{expander} varies in the literature, although all definitions capture that basic idea that these polynomials give many different outputs.}:
functions $f(x_1,...,x_k)$ such that the cardinality of the image set, restricted to finite sets $A_i$ is large with respect to the size of the $A_i$'s, for any choice of sets $A_i$. In the simplest balanced case when the sets $A_i$ have the same cardinality $n$, expanders are functions which guarantee that 
\[|f(A_1,...,A_k)| \gg n^{1+c}\,,\]
where $f(A_1,...,A_k) :=  \left\{ f(a_1,...,a_k) : a_i \in A_i \right\}$ and $c$ is some positive absolute constant.

Throughout this paper, the notation
 $X\gg Y$, $Y \ll X,$ $X=\Omega(Y)$, and $Y=O(X)$ are all equivalent and mean that $X\geq cY$ for some absolute constant $c>0$. $X \gg_a Y$ means that the implied constant is no longer absolute, but depends only on $a$.

Expander functions have two main questions of interest: ($i$) classifying which functions $f$ are expanders; ($ii$) determining the rate of expansion. Applying the principle of the impossibility of the coexistence of additive and multiplicative structure (termed the \emph{sum-product phenomenon}), one should expect that as long as $f$ combines multiplication and addition in some non-trivial way, then $f$ is an expander. 

Elekes and R\'{o}nyai \cite{ElekesRonyai} gave a classification of two variable polynomials $f$ which are not expanders, by showing\footnote{In fact, the lower bound claimed in \cite{ElekesRonyai} was of the form $|f(A,B)| \gg \omega (n)$, but an inspection of the proof shows that it gives $|f(A,B)| \gg n^{1+c}$.} that there is some fixed constant $c>0$ such that either

\begin{itemize}
    \item $\forall A,B \subset \mathbb R$ with $|A|=|B|=n$ we have $|f(A,B)| \gg n^{1+c}$, or
    \item $f$ takes the form $f(x,y) = g(h(x) + k(y))$ or $f(x,y) = g(h(x)k(y))$ for some univariate polynomials $g,h,k$.
\end{itemize}
Any $f$ that is of one of the previous forms is called \emph{degenerate}. For non-degenerate $f$ and finite sets $A,B \subseteq \R$, finding lower bounds for $|f(A,B)|$ is termed the Elekes-R\'{o}nyai problem. The current best result in this direction is due to Raz, Sharir and Solymosi \cite{RazSharirSolymosi}. 
\begin{theorem}[Raz - Sharir - Solymosi; balanced case]\label{thm:RSS elekes ronyai}
Let $f \in \R[x,y]$ be a polynomial of degree $d$. If $f$ is not degenerate, then for all finite $A, B \subseteq \R$ with $|A| = |B| = n$, we have
    $$|f(A,B)| \gg_d n^{4/3}.$$
\end{theorem}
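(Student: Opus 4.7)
The plan is to combine Cauchy--Schwarz with an incidence bound for algebraic curves, and extract the non-degeneracy hypothesis as the statement that the resulting system of curves is in ``sufficiently general position'' for the Szemer\'edi--Trotter framework to apply.

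First I would set $S = f(A,B)$ and let $r(s) = |\{(a,b) \in A \times B : f(a,b) = s\}|$, so that $\sum_{s \in S} r(s) = n^2$. By Cauchy--Schwarz,
\[
n^4 \;\leq\; |S| \cdot \sum_{s \in S} r(s)^2 \;=\; |S| \cdot |Q|,
\]
where $Q := \{(a_1,b_1,a_2,b_2) \in A^2 \times B^2 : f(a_1,b_1) = f(a_2,b_2)\}$. It therefore suffices to prove $|Q| \ll_d n^{8/3}$, since this gives $|f(A,B)| \gg_d n^{4/3}$.

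Next, for each pair $(a_1,a_2) \in A^2$, define the planar curve
\[
\gamma_{a_1,a_2} \;=\; \{(y_1,y_2) \in \R^2 : f(a_1,y_1) = f(a_2,y_2)\},
\]
which has degree at most $2d$. Then $|Q|$ is exactly the number of incidences between the family $\Gamma = \{\gamma_{a_1,a_2}\}$ of (at most) $n^2$ curves and the $n^2$ points of $B \times B$. The Pach--Sharir incidence theorem for well-behaved families of algebraic curves yields $I(\Gamma, B\times B) \ll n^{8/3}$, provided that (i) any two curves in $\Gamma$ intersect in $O_d(1)$ points, and (ii) no curve appears with high multiplicity in the family. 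Granting these two hypotheses, the proof is complete.

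The main obstacle, and the algebraic core of the argument, is to establish (i) and (ii) using the non-degeneracy of $f$. Concretely, one must show that if the number of pairs $((a_1,a_2),(a_1',a_2'))$ for which $\gamma_{a_1,a_2}$ and $\gamma_{a_1',a_2'}$ share a common component is super-linear in $n^2$, or if many curves collapse onto a single curve, then $f$ must be expressible as $g(h(x) + k(y))$ or $g(h(x)k(y))$. This is carried out by analysing when the polynomial $f(a_1,y_1) - f(a_2,y_2)$ factors with a component depending on both $y_1$ and $y_2$ simultaneously, and appealing to an algebraic rigidity statement (essentially Elekes--R\'onyai's classification, in the stronger form proved by Raz--Sharir--Solymosi): the existence of such factorisations for many pairs $(a_1,a_2)$ forces $f$ to take one of the two degenerate forms. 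Excluding this case by hypothesis and loading the exceptional overlaps into the additive error terms of the incidence bound yields $|Q| \ll_d n^{8/3}$, and hence the theorem.
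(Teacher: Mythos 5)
The paper does not prove Theorem~\ref{thm:RSS elekes ronyai}: it is quoted verbatim from Raz--Sharir--Solymosi \cite{RazSharirSolymosi} and used as a black box, so there is no in-paper argument to compare your sketch against. Evaluating your outline on its own merits: the skeleton (Cauchy--Schwarz reduction to the energy $|Q|$, reinterpretation of $|Q|$ as incidences between the point set $B\times B$ and the $n^2$ curves $\gamma_{a_1,a_2}$, and then a Szemer\'edi--Trotter/Pach--Sharir bound with $m=N=n^2$ giving $n^{8/3}$) is indeed the correct architecture, and it is essentially the architecture used in \cite{RazSharirSolymosi}. Your bookkeeping is also right: $n^4\le|S|\cdot|Q|$ plus $|Q|\ll n^{8/3}$ forces $|S|\gg n^{4/3}$.

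However, the sketch conceals the entire substance of the proof inside conditions (i), (ii), and the phrase ``algebraic rigidity statement.'' Two concrete gaps are worth naming. First, Pach--Sharir for degree-$\le d$ curves does not automatically yield the exponent $8/3$ here: that exponent requires the family $\Gamma$ to have \emph{two} degrees of freedom, i.e.\ through any two points of the plane there pass $O_d(1)$ curves $\gamma_{a_1,a_2}$. Two points $(y_1,y_2),(y_1',y_2')$ translate into the system $f(u,y_1)=f(v,y_2)$, $f(u,y_1')=f(v,y_2')$ in the parameters $(u,v)=(a_1,a_2)$, and this system can have infinitely many solutions whenever the two corresponding curves in parameter space share a component. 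Showing that this overlap is confined to an $O_d(1)$-dimensional exceptional locus, and that a positive density of such overlaps forces $f$ to be of the form $g(h(x)+k(y))$ or $g(h(x)k(y))$, is precisely the theorem of Raz--Sharir--Solymosi; it is not a lemma one can quote as an input. Second, condition (ii) (bounded multiplicity of curves in the family) is again not automatic and feeds back into the incidence count multiplicatively, so it must be established, not assumed. In short, your reduction is faithful to the known proof, but the steps you defer to ``the algebraic core'' are not background facts --- they \emph{are} the proof, and as written your argument is circular (it invokes the Raz--Sharir--Solymosi classification to prove the Raz--Sharir--Solymosi theorem). A minor slip: the curve $f(a_1,y_1)-f(a_2,y_2)=0$ has degree at most $d$ in $(y_1,y_2)$, not $2d$, though this does not affect the asymptotics.
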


This is actually a special case of a more general phenomenon; notice that the surface ${F(x,y,z) = 0}$ given by $F(x,y,z) := f(x,y) - z$ has $|A||B|$ intersection points with the Cartesian product \sloppy{${A \times B \times f(A,B)}$}, so that we have
\begin{equation} \label{triviallower}
|A||B| \leq |Z(F) \cap A \times B \times f(A,B)|,
\end{equation}
where $Z(F)$ denotes the zero set of $F$ (such zero sets shall always be taken over the real numbers). Finding upper bounds for the number of intersections of the zero set of a polynomial $F\in \mathbb{F}[x,y,z]$ and a Cartesian product $A \times B \times C$, under certain non-degeneracy conditions, is called the Elekes-Szab\'{o} problem, see \cite{ElekesSzabo}. In light of the observation \eqref{triviallower}, such results translate well into Elekes-R\'{o}nyai type statements. The best result at present is due to Raz, Sharir and de Zeeuw \cite{RazSharirdeZeeuw}:
\begin{theorem}[Raz - Sharir - de Zeeuw; balanced case over $\mathbb{R}$] \label{11/6ES}
Let $F \in \R[x,y,z]$ be an irreducible (over $\mathbb{R}$) polynomial of degree $d$, such that $Z(F)$ has dimension $2$.
Then one of the following is true.
\begin{enumerate}
    \item \label{item:rsz upper}For all $A,B,C\subseteq \mathbb{R}$ of size $|A|=|B|=|C| = N$ we have
    \begin{equation}\label{eq:RSZ11/6}
    |Z(F) \cap (A \times B \times C)| = O_d( N^{11/6})\,;
    \end{equation}
    \item \label{item:exceptional}There exists a one-dimensional sub-variety $Z_0 \subseteq Z(F)$ such that for all $v\in Z(F)\setminus Z_0$, there are open intervals $I_1,I_2,I_3\subseteq \R$ and injective real-analytic functions $\phi_i:I_i\rightarrow \R$ with real-analytic inverses ($i = 1,2,3$) so that: for $ v \in I_1\times I_2\times I_3$ and for any $(x,y,z)\in I_1\times I_2\times I_3$, we have
    \[ (x, y, z) \in Z(F) \text{ if and only if } \phi_1(x) + \phi_2(y) + \phi_3(z) = 0\,.\]
\end{enumerate}
\end{theorem}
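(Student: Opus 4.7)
The plan is to follow the Elekes--Szab\'o framework: reduce to an incidence problem between points and algebraic curves in $\R^2$, and establish a dichotomy in which failure of the quantitative bound forces the additive-form alternative.

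First, slice $Z(F)$ into curves. For each fixed $c \in C$, the fibre $\gamma_c = \{(x,y) \in \R^2 : F(x,y,c) = 0\}$ is an algebraic curve of degree $O_d(1)$, and
$$|Z(F) \cap (A \times B \times C)| = \sum_{c \in C} |\gamma_c \cap (A \times B)|.$$
So the problem becomes one of bounding incidences between a point set of size $N^2$ and a curve family of size $N$. Analogous decompositions apply for slicing in the $x$- or $y$-direction, which will be useful for symmetrising the final argument.

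Direct application of the Pach--Sharir incidence theorem to this configuration only recovers the trivial bound $O(N^2)$. To improve by the $N^{1/6}$ factor one must exploit the full three-fold symmetry of the problem via a second-moment / energy analysis. The natural quantity to bound is
$$Q := \#\big\{(p,p') \in (Z(F) \cap (A\times B\times C))^2 : p \neq p',\ p\text{ and }p'\text{ agree in exactly one coordinate}\big\},$$
or a close relative. Cauchy--Schwarz upgrades an upper bound on $Q$ to an improved first-moment bound on $|Z(F) \cap (A\times B\times C)|$. Bounding $Q$ itself reduces, after a further slicing, to an incidence problem between points in $A \times A$ and a new family of algebraic curves parametrised by $B \times B$ or by $C$. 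Here the exponent $11/6$ should emerge as the balanced trade-off in the Pach--Sharir bound for curves with two degrees of freedom: plugging $|P| = N^2$, $|\Gamma| = N^2$ into $I \ll |P|^{2/3}|\Gamma|^{2/3} + |P| + |\Gamma|$ and then inverting Cauchy--Schwarz yields exactly the target exponent.

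For this scheme to run, each pair of curves in the auxiliary family must intersect in $O_d(1)$ common points, and no low-degree algebraic curve should contain too many members of the family. When either genericity condition fails systematically one lands in the second case, and the task becomes extracting the additive local form---this is the main obstacle. The heuristic is that persistent sharing of common components among the curves $\{\gamma_c\}$ reflects a functional relation among the partial derivatives of $F$; applying the implicit function theorem at smooth points of $Z(F) \setminus Z_0$ (where $Z_0$ collects singularities and coordinate-axis-parallel fibres) and combining with irreducibility of $F$ should yield real-analytic local parametrisations $\phi_1, \phi_2, \phi_3$ with $\phi_1(x) + \phi_2(y) + \phi_3(z) = 0$ along $Z(F)$. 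The technical heart is ruling out intermediate degenerate behaviours—partial failure of Bézout genericity that does not amount to full algebraic degeneracy—which requires a careful analysis of the possible algebraic symmetries of $Z(F)$ and, ultimately, of the Galois/monodromy action on its fibres over each coordinate axis.
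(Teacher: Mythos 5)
This theorem is not proved in the paper: it is quoted verbatim (with a pointer to \cite{RazSharirdeZeeuw}) as background, so there is no ``paper's own proof'' to compare against. What you have written is a blind outline of the argument that actually appears in the cited work of Raz, Sharir and de Zeeuw (building on Elekes--Szab\'o), and at that level of resolution it is broadly faithful: slicing $Z(F)$ into fibre curves, passing to a second-moment quantity counting pairs of zeros that share a coordinate, converting that to an incidence problem between an $N^2$-point set and an $N^2$-curve family, and recovering $N^{11/6}$ from the $k=2$ Pach--Sharir bound $I\ll |P|^{2/3}|\Gamma|^{2/3}+|P|+|\Gamma|$ via Cauchy--Schwarz --- the exponent arithmetic you carry out is correct.

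The outline is nevertheless far from a proof, and the distance is concentrated in exactly the places you flag as ``heuristic.'' Concretely: (i) the quantity you call $Q$ has to be set up as the number of quadruples $(b,b',c,c')$ admitting a common $a$ with $F(a,b,c)=F(a,b',c')=0$, and the auxiliary curves are then dual curves in a $(b,b')$- or $(c,c')$-plane parametrised by $(c,c')$; verifying that these curves have bounded degree, constitute a genuine two-parameter family, and satisfy the hypotheses of the Pach--Sharir theorem (bounded multiplicity of curves through a pair of points) requires real work and is where the non-degeneracy hypothesis on $F$ enters. (ii) The entire content of the dichotomy lives in the case where those hypotheses fail. Your paragraph ``applying the implicit function theorem at smooth points \dots\ should yield real-analytic local parametrisations'' is not an argument: deducing that systematic overlap of the dual curves forces the local form $\phi_1(x)+\phi_2(y)+\phi_3(z)=0$ (and not some intermediate degeneracy) is the technical core of \cite{ElekesSzabo} and \cite{RazSharirdeZeeuw}, involving a monodromy/group-theoretic analysis of the fibration and a classification of the resulting algebraic group actions. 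As written, the hardest half of the theorem is acknowledged but not addressed. Since the present paper uses Theorem~\ref{11/6ES} as an imported black box, none of this is required here --- but it would be needed if you intended the sketch to stand as a proof.
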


The question of finding the correct exponent in the upper bound of \eqref{eq:RSZ11/6} remains open. The best lower bound comes from a simple construction in \cite{MRNWdZ} of the non-degenerate polynomial $F(x,y,z)=(x-y)^2+x-z$ 
and sets $A,B$ and $C$ of cardinality $N$ with $|Z(F) \cap (A \times B \times C)| = \Omega( N^{3/2})$.

Although we state here the balanced case when $|A|=|B|=|C|$ over $\mathbb{R}$, Raz, Sharir, and de Zeeuw actually prove a stronger, unbalanced case over $\mathbb{C}$ -- we refer the reader to \cite[Theorem 1.2]{RazSharirdeZeeuw} for the details of this result. The exceptional form in Theorem \ref{11/6ES} was qualitatively discovered by Elekes and Szab\'o \cite[Theorem 3]{ElekesSzabo} and they provide a detailed account of the structure of the varieties $Z(F)$ corresponding to the exceptional form in both the real and complex cases, see \cite[Example 1]{ElekesSzabo}. Some simple examples of polynomials that are degenerate in the sense of Theorem \ref{11/6ES} are $F(x,y,z)=x+y+z$, $F(x,y,z)=x+y^2+z^2$ and $F(x,y,z)=xyz$.

In the unpublished article \cite{Solymosi}, Solymosi gives a much more specific formulation of degeneracy over $\mathbb{Q}$ for the Elekes-R\'onyai problem, namely that if $F(x,y,z)$ is not an expander over the rationals, then $F(x,y,z)$ has one of three very specific forms (see \cite[Theorems 1 and 2]{Solymosi} for the details of these forms).  This naturally translates into a more specific definition of degeneracy in the two variable situation of Theorem~\ref{thm:RSS elekes ronyai}.

The theme of this note is to demonstrate the link between the Elekes-Szab\'o problem and the algebraic geometric \emph{Uniformity Conjecture}.

\subsection{The Uniformity Conjecture and Hyperelliptic Curves}
In a landmark paper, Faltings \cite{Faltings} showed that the number of rational points on any curve defined over $\mathbb{Q}$ of genus at least 2 is finite; this number however may depend on the curve. In this paper we make use of the Uniformity Conjecture, see \cite{CaporasoHarrisMazur}, which asserts the existence of a uniform bound $B_g = B_g(\mathbb{Q})$ for the number of rational points on any smooth curve of genus $g\geq 2$ over $\mathbb{Q}$. This conjecture stands over any number field $K$ (with $B_g(K)$ substituted for $B_g(\mathbb{Q})$); we only make use of the formulation over $\Q$.

\begin{conjecture}[Uniformity Conjecture over $\mathbb{Q}$] \label{uniformityconjecture}
For every integer $g\geq 2$ there exists an integer $B_g$ such that any smooth curve defined over $\mathbb{Q}$ has at most $B_g$ rational points.
\end{conjecture}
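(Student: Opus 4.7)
The statement is an open conjecture in arithmetic geometry; the paper uses it only as a working hypothesis, so no proof is attempted here (nor could one reasonably be offered at the present state of the field). Rather than pretend otherwise, I will sketch the standard reduction strategy due to Caporaso, Harris and Mazur \cite{CaporasoHarrisMazur}, making explicit where the obstruction lies.

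The plan is to reduce the Uniformity Conjecture to the Bombieri-Lang conjecture for varieties of general type. Fix $g \geq 2$ and work over (a suitable cover of) the moduli space $M_g$ of smooth genus-$g$ curves. Form the universal curve $\mathcal{C} \to M_g$ and its $n$-fold fibered self-product $\mathcal{C}^{(n)} \to M_g$. First I would show that for $n$ sufficiently large in terms of $g$, the variety $\mathcal{C}^{(n)}$ admits a dominant rational map to a variety of general type; this is essentially a Kodaira-dimension computation on fibered powers, using the positivity of the relative canonical bundle of $\mathcal{C}/M_g$. Next, invoke the Bombieri-Lang conjecture on the target to conclude that the $\Q$-rational points of $\mathcal{C}^{(n)}$ are not Zariski dense, so they lie in a proper closed subset $Z \subsetneq \mathcal{C}^{(n)}$. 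Finally, a Noetherian induction on the dimension of $Z$, combined with a specialisation argument over $M_g$, extracts the uniform bound $B_g$: if some single curve $C_0/\Q$ of genus $g$ possessed more than $B_g$ rational points, then the fibre of $\mathcal{C}^{(n)}$ above $[C_0] \in M_g$ would contain an $n$-tuple of rational points lying outside $Z$, contradicting Zariski non-density.

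The hard part, obviously, is Bombieri-Lang itself, which is widely regarded as being at least as difficult as the Uniformity Conjecture and is presently far out of reach; there is no candidate unconditional route through the Caporaso-Harris-Mazur framework. Unconditional partial progress comes in two flavours. The Chabauty-Coleman method and its non-abelian refinements due to Kim and Balakrishnan give effective uniform bounds whenever the Mordell-Weil rank of the Jacobian is strictly smaller than $g$; and the recent work of Dimitrov, Gao and Habegger yields uniform rank-dependent bounds via Vojta's inequality together with height estimates on the universal abelian variety. Neither of these suffices to establish Conjecture \ref{uniformityconjecture} in the generality needed by the main theorem, so for the remainder of the paper I would simply assume the conjecture and proceed.
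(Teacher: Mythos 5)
You correctly recognise that this is an open conjecture which the paper, like you, does not attempt to prove but instead assumes as a hypothesis, citing \cite{CaporasoHarrisMazur} for the reduction to Bombieri--Lang. Your sketch of the Caporaso--Harris--Mazur strategy and the survey of partial unconditional progress are accurate and consistent with the paper's treatment, so nothing further is required here.
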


It is proven in \cite{CaporasoHarrisMazur} that Conjecture \ref{uniformityconjecture} would follow from the Bombieri-Lang conjecture (also called the \emph{weak-Lang conjecture}), which is a deep conjecture in arithmetic geometry concerning the set of rational points on varieties of general type. Conjecture \ref{uniformityconjecture} has been used before to prove conditional results in combinatorics, particularly concerning square numbers\footnote{Throughout this paper a set of square numbers should be understood to mean a set of rational squares. A \emph{rational $k$th power} is a number $a^k$ such that $a \in \Q$.}. Cilleruelo and Granville \cite{CillerueloGranville} used Conjecture \ref{uniformityconjecture} to prove that for any set $A \subseteq \Q$ of square numbers we have $|A+A| \gg |A|^{4/3}$. For context, it is currently not known unconditionally that $|A+A| \gg |A|^{1+c}$ for some $c >0$. In fact Cilleruelo and Granville proved something stronger -- that the number of solutions to $a+b = c+d$ in any set of rational squares $A$ is at most $O(|A|^{8/3})$. It was conjectured by Chang \cite{Chang} that the number of such solutions is bounded by $|A|^{2 + \epsilon}$ for all $\epsilon > 0$. Conditional results have recently been given by Shkredov and Solymosi \cite{ShkredovSolymosi}, who improved on the aforementioned results of \cite{CillerueloGranville} and gave an optimal bound on the fourth order additive energy of a set of squares. 

Sets of rational squares are natural objects of interest, and their relevance to the Uniformity Conjecture is demonstrated via the choice of curve. A \emph{hyperelliptic curve} is a curve of the form $y^2 = f(x)$ with certain constraints on $f$. Suitably defined (see below), a hyperelliptic curve has genus $g\geq 2$, and so by Conjecture~\ref{uniformityconjecture} has at most $B_g$ rational points.

\begin{definition}\label{def:hyperelliptic}
A hyperelliptic curve $\mathcal{C}$ over $\mathbb{C}$ is the set of points $(x,y)\in \mathbb{C}^2$ satisfying $y^2 = f(x)$ for $f\in \mathbb{C}[x]$ of degree $d\geq 5$ so that $\gcd(f(x),f'(x)) = 1$ (i.e. $f$ has no repeated root in $\mathbb{C}$).
The genus of $\mathcal{C}$ is $g = \lfloor (d-1)/2\rfloor$.
\end{definition}

Note that the condition that $f$ has no repeated roots is equivalent to $\cC$ being non-singular over $\mathbb{C}$. Moreover, since $f$ has no repeated roots, it is in particular square free, and so the curve $\mathcal{C}$ is irreducible over $\mathbb{C}$. Thus Definition~\ref{def:hyperelliptic} coincides with the classical definition of a hyperelliptic curve (over $\mathbb{C}$): a non-singular projective irreducible curve over $\mathbb{C}$ of genus $g\geq 2$ with an affine equation of the form $y^2 = f(x)$. In this paper we restrict to $f(x) \in \mathbb{Q}[x]$. For a more thorough treatment of hyperelliptic curves we refer to Galbraith \cite[Chapter 10]{Galbraith}.

The Uniformity Conjecture is often applied to combinatorial problems via hyperelliptic curves; this is done by relating the object to be counted (e.g. the number of edges in a certain graph) to rational points on some hyperelliptic curves. For instance, to bound the fourth order additive energy of a set of squares, Shkredov and Solymosi \cite{ShkredovSolymosi} used the genus 2 hyperelliptic curve $y^2 = (x^2 +\alpha)(x^2 + \beta)(x^2 + \gamma)$ for non-zero $\alpha,\beta,\gamma \in \mathbb{Z}$.

The Uniformity Conjecture has been used to derive new information about sum-product problems, particularly sum-product problems considered on sparse graphs. For a bipartite graph $G$ taken over $A \times B$ for some finite sets $A,B \subseteq \Q$, we define the sum and product sets of $A$ and $B$ \emph{along} $G$ to be
$$A+_G B = \{ a + b : (a,b) \in E(G) \}, \quad A\cdot_G B = \{ a  b : (a,b) \in E(G) \}.$$ 
It was conjectured by Erd\H{o}s and Szemer\'{e}di that for any bipartite graph $G$ on $A \times A$ with $A$ a set of integers satisfying $|A|= N$ with $N$ sufficiently large, and $|E(G)| > N^{1+c}$ for $0<c\leq 1$ , we have
$$|A +_G A| + |A \cdot_G A| \gg |E(G)|^{1 - \epsilon}$$
for all $\epsilon >0$. This conjecture was refuted in \cite{ARS}, but the question of finding the best possible sum-product bounds on sparse graphs remains open. An elementary argument gives a lower bound for this problem of 
\begin{equation} \label{easy}
|A +_G A| + |A \cdot_G A| \gg |N|^{1/2}\,.
\end{equation}

The case when $G$ is a matching has attracted particular attention.  Alon, Angel, Benjamini, and Lubetzky \cite{Alon} used the Uniformity Conjecture to give an improvement to \eqref{easy}. Further progress was obtained by Shkredov and Solymosi \cite{ShkredovSolymosi}, still under the assumption of the Uniformity Conjecture. They focus on the case when $G$ is a matching, but their proof implicitly gives the following more general bound for sums and products along graphs: given $A,B \subset \mathbb Q$ and $G$ a bipartite graph on vertex set $A \times B$,
\begin{equation} \label{spgraphs}
   |A +_G B| + |A \cdot_G B| \gg |G|^{3/5} .
\end{equation}

Unconditionally, the best lower bound in this direction is due to Chang \cite{Chang} who proved a lower bound for a related problem: $ |A+_G A| + |A -_G A| + |A\cdot_G A| \gg N^{1/2}\log^{1/48}N$. Improved bounds for denser graphs can be obtained using the Szemer\'{e}di-Trotter Theorem; see for example \cite[Theorem 10]{ARS}.

\section{Main Results}

In this paper, we give conditional results that quantitatively improve the Elekes-Szab\'o problem in particular cases: we show that certain families of polynomials $F\in \mathbb{Q}[x,y,z]$ admit a stronger upper bound than that given in Theorem \ref{11/6ES} for finite sets $A,B,C\subseteq \mathbb{Q}$. For some polynomials we attain the optimal upper bound $|Z(F)\times A\times B\times C|= O_d(|A|+|B|+|C|)$
. We then present a variety of applications.
\subsection{An Elekes-Szab\'o type theorem}
Our main result is the following.

 \begin{theorem} \label{thm:main}
 Assume the Uniformity Conjecture is true, and let $F(x,y,z)$ be a polynomial of degree $d$ of the form $q(x,y,z)^2 - p(x,y)$, with $q(x,y,z) \in \Q[x,y,z]$ and $p(x,y) \in \Q[x,y]$, with $\frac{\partial F}{\partial z} \neq 0$. Let $d_p := \deg_y(p) \geq 1$, and define $s$ to be the smallest integer such that $s d_p \geq 5$. Let $A,B,C \subseteq \Q$ be any finite sets, and assume that for any $a \in A$, $p(a,y)$ has no repeated roots. Then we have
 \begin{equation} \label{long}|Z(F) \cap A \times B \times C| \ll_{s,d} |A||B|^{1-1/s}  +|A|^{1-1/s}|B|+|M_{A,p}|^{1/s}|B| + L_F|C|
 \end{equation}

 where
 \begin{align*}
 M_{A,p}:=  \{(a_1,\dots, a_s) \in A^s : a_i &\text{ are distinct and there exists } i \neq j \text{ such that } p(a_i,y) \text{ and } p(a_j,y)
 \\&\text{ have a common root in } \mathbb{C} \}
 \end{align*}
 and 
$$L_F := |\{ (a,b) \in A \times B : F(a,b,z) \equiv 0 \}|.$$
 \end{theorem}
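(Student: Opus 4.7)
The plan is to reduce the count to a question about rational squares and then apply the Uniformity Conjecture through hyperelliptic curves. First, I would observe that any $(a,b,c) \in Z(F) \cap A \times B \times C$ satisfies $q(a,b,c)^2 = p(a,b)$, so $p(a,b)$ must be a rational square (since $q(a,b,c) \in \Q$). Pairs $(a,b)$ with $F(a,b,z) \equiv 0$ contribute at most $L_F|C|$, which accounts for the last term of \eqref{long}; for all other pairs, at most $\deg_z(F) = 2\deg_z(q)$ values of $c \in C$ satisfy $F(a,b,c)=0$. Hence it suffices to bound the cardinality of $X := \{(a,b) \in A \times B : F(a,b,z) \not\equiv 0 \text{ and } p(a,b) \text{ is a rational square}\}$ by $O_{s,d}\bigl(|A||B|^{1-1/s} + |A|^{1-1/s}|B| + |M_{A,p}|^{1/s}|B|\bigr)$.

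To do so, I would group by the second coordinate: for each $b \in B$, set $r_b := |\{a \in A : (a,b) \in X\}|$, so $|X| = \sum_{b \in B} r_b$. Given an ordered $s$-tuple $(a_1,\ldots,a_s) \in A^s$ of distinct elements with $(a_1,\ldots,a_s) \notin M_{A,p}$, I would consider the curve $\cC_{a_1,\ldots,a_s}\colon w^2 = \prod_{i=1}^s p(a_i,t)$. The polynomial $\prod_i p(a_i,t)$ has degree $sd_p \geq 5$ in $t$, and the hypothesis that each $p(a,y)$ has no repeated roots, together with $(a_1,\ldots,a_s) \notin M_{A,p}$, guarantees that this product has no repeated roots in $\C$. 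Hence Definition \ref{def:hyperelliptic} produces a hyperelliptic curve of genus $\lfloor (sd_p-1)/2\rfloor \geq 2$, and Conjecture \ref{uniformityconjecture} bounds its rational points by a constant $B_g$ depending only on $s$ and $d$. Since every $b \in B$ with all $p(a_i,b)$ rational squares yields such a rational point, at most $B_g$ values of $b$ have this property.

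I would then double-count $N := \#\{(a_1,\ldots,a_s,b) \in A^s \times B : a_i \text{ distinct and } p(a_i,b) \text{ is a rational square for all } i\}$. Splitting according to whether $(a_1,\ldots,a_s)$ lies in $M_{A,p}$ gives $N \leq B_g|A|^s + |M_{A,p}||B|$. On the other hand, $N = \sum_b r_b(r_b-1)\cdots(r_b-s+1) \geq 2^{-s}\sum_{b : r_b \geq 2s} r_b^s$. Combining these and applying H\"older's inequality yields $\sum_{b : r_b \geq 2s} r_b \leq |B|^{1-1/s}\bigl(\sum_b r_b^s\bigr)^{1/s} \ll_{s,d} |A||B|^{1-1/s} + |M_{A,p}|^{1/s}|B|$. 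The residual low-multiplicity contribution $\sum_{b : r_b < 2s} r_b \leq 2s|B|$ is absorbed into the term $|A|^{1-1/s}|B|$ (noting $|A|^{1-1/s} \geq 1$ whenever $X$ is nonempty), which produces the desired bound on $|X|$ and hence on $|Z(F) \cap A\times B\times C|$.

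The main obstacle is ensuring $\cC_{a_1,\ldots,a_s}$ satisfies the hypotheses of Definition \ref{def:hyperelliptic}: one needs $\prod_i p(a_i,t)$ to be square-free over $\C$, which is precisely why tuples in $M_{A,p}$ (where some pair $p(a_i,t),p(a_j,t)$ share a root) are separated out and why the hypothesis ``$p(a,y)$ has no repeated roots for each $a \in A$'' is imposed. A minor technicality is the degenerate case in which some $p(a_i,b) = 0$; this simply contributes the additional rational point $(b,0)$ on $\cC_{a_1,\ldots,a_s}$ and does not affect the bound. All remaining steps are standard counting, bookkeeping, and an application of H\"older's inequality.
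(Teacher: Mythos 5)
Your approach is in essence the same as the paper's: reduce to counting pairs $(a,b)$ admitting a solution, group by $b$, apply H\"older's inequality, and invoke the Uniformity Conjecture via the hyperelliptic curve $w^2 = \prod_{i=1}^s p(a_i,t)$, with tuples in $M_{A,p}$ and tuples with repeated $a_i$ treated trivially. Your bookkeeping is a little different -- you bound the set $X$ directly and use the falling factorial $r_b(r_b-1)\cdots(r_b-s+1)$ to handle repeated $a_i$, where the paper instead introduces a weighted bipartite graph on $A\times B$ and splits the H\"older sum into three explicit cases $S_1, S_2, S_3$ -- but these are cosmetic distinctions, and your bound $N \geq \sum_b r_b(r_b-1)\cdots(r_b-s+1)$ (which you write as an equality; it is an inequality, since $r_b$ imposes the extra condition $F(a,b,z)\not\equiv 0$) plays the same role.

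There is, however, one genuine gap. You assert that $\prod_{i=1}^s p(a_i,t)$ has degree $sd_p \geq 5$, but this requires each $p(a_i,t)$ to have degree exactly $d_p = \deg_y p$, which need not hold: the hypothesis ``$p(a,y)$ has no repeated roots'' does not rule out a \emph{drop} in degree. For example, with $p(x,y) = xy+1$ and $a=0$ one has $p(0,y)=1$, a nonzero constant with no roots at all, so the hypothesis is satisfied vacuously yet the factor contributes degree $0$. If several $a_i$ suffer such drops the product can have degree below $5$, so the curve need not have genus $\geq 2$ and the Uniformity Conjecture cannot be invoked. The paper pre-empts this by removing $R := \{a\in A : r_{d_p}(a) = 0\}$, where $r_{d_p}$ is the leading coefficient of $p$ viewed in $(\Q[x])[y]$; since $\deg r_{d_p} \leq d$, one has $|R|\leq d$, and the contribution of $R\times B\times C$ is absorbed separately into $O_d(|B|) + L_F|C|$. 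Your argument would be repaired the same way (or by noting the bad tuples number $O_{s,d}(|A|^{s-1})$ and contribute $O_{s,d}(|A|^{s-1}|B|)$ to $N$, which the bound can absorb), but as written this case is not addressed and the key degree claim is false in general.
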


Although the statement of Theorem \ref{thm:main} looks somewhat complicated and unwieldy, we will give several applications using concrete polynomials $F(x,y,z)$ which correspond to natural problems in discrete geometry and combinatorics. The reader should keep in mind that the important terms in \eqref{long} are the first two, and that in all of our applications we can perform calculations to ensure that the other terms can be absorbed by these. This means that in the worst applicable case, when $\deg_y(p) = 1$, a balanced application of Theorem \ref{thm:main} (setting $|A|=|B|=|C|=N$) gives an upper bound of $N^{9/5}$, which improves upon the bound of $N^{11/6}$ given by Theorem \ref{11/6ES}. As $d_p$ increases the result improves, and when $d_p \geq 5$, Theorem \ref{thm:main} gives the optimal bound of $O_d(|A| + |B| + |C|)$. In the case where $F(x,y,z)$ is an irreducible polynomial, we have the bound $L_F\leq d^2$ -- see Raz, Sharir and de Zeeuw \cite[Lemma 2.1]{RazSharirdeZeeuw}.

An interesting feature of Theorem \ref{thm:main} is that there exist polynomials $F$ for which Theorem~\ref{thm:main} gives a non-trivial bound on the intersection of $Z(F)$ with a Cartesian product, but that are degenerate in the sense of Theorem \ref{11/6ES}. A simple example is $F(x,y,z)=x+y+z^2$. This highlights the different nature of the Elekes-Szab\'{o} problem when it is considered over $\mathbb Q$ rather than $\mathbb R$. We remark that we believe it is possible to give similar results for polynomials of the form $F(x,y,z) = q(x,y,z)^k - p(x,y)$ for $k\geq 3$, although we do not pursue this here. 

\subsection{Applications: Discrete Geometry}
\label{Sec:discretegeometryapplications}
The Elekes-Szab\'o problem was introduced in conjunction with an algebraic approach to questions of combinatorial geometry: at a very high level, it is often the case that questions in discrete geometry, particularly questions related to distances, can be reduced to the study of the zeroes of an appropriately chosen polynomial that lie in a Cartesian product. Thus, progress on the Elekes-Szab\'o problem leads to progress towards certain combinatorial geometric problems.

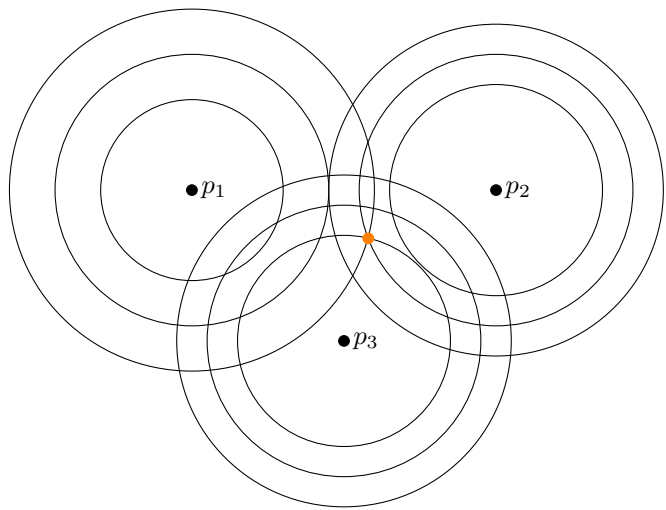
\begin{figure}[h!]
    \centering
       \begin{tikzpicture}
\draw[color=black](-2,0) circle (2.4);
\draw[color=black](-2,0) circle (1.8);
\draw[color=black](-2,0) circle (1.2);

\draw[color=black](2,0) circle (2.2);
\draw[color=black](2,0) circle (1.8);
\draw[color=black](2,0) circle (1.4);

\draw[color=black](0,-2) circle (2.2);
\draw[color=black](0,-2) circle (1.8);
\draw[color=black](0,-2) circle (1.4);

\filldraw [black] (-2,0) circle (2pt) node[anchor=west] {$p_1$};
\filldraw [black] (2,0) circle (2pt) node[anchor=west] {$p_2$};
\filldraw [black] (0,-2) circle (2pt) node[anchor=west] {$p_3$};

\filldraw [orange] (0.32,-0.64) circle (2pt) node[anchor=west] {};
\end{tikzpicture}
    \caption{Three families of concentric circles. A triple intersection point is highlighted.}
    \label{fig:my_label}
\end{figure}

Consider the following problem: suppose we have three families of $n$ concentric circles $\mathcal C_1, \mathcal C_2, \mathcal C_3$, centred at $p_1, p_2$ and $p_3$ respectively. Suppose also that the three points $p_i \in \mathbb R^2$ are not collinear\footnote{A construction of Elekes \cite{Elekes} gives a system of three sets of concentric circles with collinear centres determining $\Omega(n^2)$ triple intersections, and so this additional restriction is necessary.}. Let $T(\mathcal C_1, \mathcal C_2, \mathcal C_3)$ denote the set of triple intersection points of the three families of circles:
\[
T(\mathcal C_1, \mathcal C_2, \mathcal C_3):=\{(C_1,C_2,C_3) \in \mathcal C_1 \times \mathcal C_2 \times \mathcal C_3 : C_1 \cap C_2 \cap C_3 \neq \emptyset \}.
\]

The question of how large this set can be was first raised by Erd\H{o}s, Lov\'{a}sz and Vesztergombi in \cite{ELV}. Elekes and Szab\'{o} \cite{ElekesSzabo} showed that it can be reduced to a question of bounding the intersection of the zero set of a non-degenerate polynomial with a Cartesian product. The main result of their paper, a bound for what we now call the Elekes-Szab\'{o} problem, led to a non-trivial upper bound $|T(\mathcal C_1,\mathcal C_2, \mathcal C_3)| \ll n^{2-c}$. The work of Raz, Sharir and de Zeeuw \cite{RazSharirdeZeeuw} gives the improved upper bound
\begin{equation} \label{11over6}
|T(\mathcal C_1,\mathcal C_2, \mathcal C_3)| \ll n^{11/6}.
\end{equation}
We improve to this estimate, conditional on the Uniformity Conjecture, together with the additional assumptions that the circles have rational centres and that the squares of the radii are rational numbers.

\begin{corollary} \label{cor:triples}
Assume the Uniformity Conjecture and suppose we have three families of concentric circles $\mathcal C_1, \mathcal C_2, \mathcal C_3$, centred at non-collinear rational points $p_1, p_2$ and $p_3$ respectively. Define $R_i$ to be the set of radii of the circles in $\mathcal C_i$ and suppose that $R_i^2 \subset \mathbb Q$. Then 
\[
|T(\mathcal C_1,\mathcal C_2, \mathcal C_3)| \ll |\mathcal C_1||\mathcal C_2|^{2/3}+ |\mathcal C_1|^{2/3}|\mathcal C_2|.
\] \end{corollary}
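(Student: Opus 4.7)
The plan is to cast the problem as a direct application of Theorem \ref{thm:main}. I set $A := R_1^2$, $B := R_2^2$, $C := R_3^2$, which by hypothesis are finite subsets of $\Q$ with $|A| = |\mathcal{C}_1|$, $|B| = |\mathcal{C}_2|$, $|C| = |\mathcal{C}_3|$. Since each concentric circle is determined by its squared radius, it suffices to bound the number of triples $(a, b, c) \in A \times B \times C$ for which the three circles with squared radii $(a, b, c)$ share a common point.

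First I would apply a rational affine change of coordinates placing $p_1 = (0, 0)$, $p_2 = (e, 0)$ with $e \in \Q^\ast$, and $p_3 = (\alpha, \beta)$ with $\alpha, \beta \in \Q$ and $\beta \neq 0$; this is possible because $p_1, p_2, p_3$ are non-collinear. Subtracting pairs of the three circle equations yields the linear system $2ex = a - b + e^2$ and $2\alpha x + 2\beta y = a - c + \alpha^2 + \beta^2$, which uniquely determines $x = x(a, b)$ (independent of $c$) and $y = y(a, b, c)$ with nonzero $c$-coefficient $-1/(2\beta)$. The three circles share a common point precisely when this unique $(x, y)$ satisfies $x^2 + y^2 = a$; equivalently, when
\[
F(a, b, c) := y(a, b, c)^2 - \bigl(a - x(a, b)^2\bigr) = 0.
\]
Setting $q(a, b, c) := y(a, b, c)$ and $p(a, b) := a - x(a, b)^2$ realises $F$ in the form $q^2 - p$ required by Theorem \ref{thm:main}, and the correspondence between triples of circles with a common point and $(a,b,c)\in Z(F)\cap(A\times B\times C)$ is bijective. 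The key numerical observation is that $x(a, b)$ is linear in $b$ with nonzero coefficient, so $p$ is quadratic in $b$; hence $d_p = 2$ and $s = 3$.

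The remaining work is verifying the hypotheses and absorbing the auxiliary terms of \eqref{long}. For $a \neq 0$ the roots of $p(a, \cdot)$ in $\C$ are $(\sqrt{a} \pm e)^2$, which are distinct, so the no-repeated-roots hypothesis holds once the (at most one) zero-radius circle is discarded; this discarding contributes only $O(1)$ to $|T|$. Since $q$ is nonconstant in $z$, $F(a, b, z)$ is never identically zero, so $L_F = 0$. Two distinct $a_1, a_2 \in A$ can share a root of $p(\cdot, y)$ only when $\pm \sqrt{a_1} \pm \sqrt{a_2} = \pm 2e$, which fixes $a_2$ up to $O(1)$ possibilities given $a_1$; hence $M_{A, p}$ contains at most $O(|A|^2)$ triples and $|M_{A, p}|^{1/3} \ll |A|^{2/3}$. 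Plugging $s = 3$ into \eqref{long} therefore yields
\[
|T(\mathcal{C}_1, \mathcal{C}_2, \mathcal{C}_3)| \ll |A||B|^{2/3} + |A|^{2/3}|B| \;=\; |\mathcal{C}_1||\mathcal{C}_2|^{2/3} + |\mathcal{C}_1|^{2/3}|\mathcal{C}_2|,
\]
as claimed. I do not anticipate a substantive obstacle: the most delicate step is choosing coordinates so that the radical axis of $\mathcal{C}_1$ and $\mathcal{C}_2$ decouples $x$ from $c$, ensuring that $p$ depends only on $(a,b)$ with $\deg_b p = 2$; once this is in place, the control of $L_F$ and $M_{A,p}$ is immediate from the explicit forms of $p$ and $q$.
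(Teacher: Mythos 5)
Your proof is correct, and its overall strategy mirrors the paper's: eliminate the point coordinates from the three circle equations, realise the resulting relation on squared radii as $q^2 - p$ with $\deg_y p = 2$, verify the hypotheses of Theorem~\ref{thm:main}, and apply it with $s=3$. The genuinely different (and nicer) ingredient is the coordinate normalisation. The paper only translates and dilates, so $p_2 = (1,a)$ with $a$ typically nonzero; the radical axes are then slanted, both $x$ and $y$ depend on all of $X,Y,Z$, and the eliminant $F$ is a long mess that the authors compute with SAGE before dividing by $(a^2+1)$ and completing the square in $Z$. You instead use a rotational dilation, i.e.\ a rational similarity $w \mapsto \begin{pmatrix} p & q \\ -q & p\end{pmatrix}(w - p_1)$, which has rational entries, preserves $\Q^2$, maps circles to circles, and scales squared distances by $p^2+q^2 \in \Q$ (so $R_i^2 \subset \Q$ is preserved); it sends $p_2$ to $(p^2+q^2, 0)$ on the $x$-axis. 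With $p_2$ on the axis, the radical axis of $\mathcal C_1$ and $\mathcal C_2$ is vertical, so $x$ depends only on $(a,b)$, and $F$, $q$, $p$ can be written down by hand. The non-collinearity hypothesis is used exactly where you say ($\beta \neq 0$), the root analysis of $p(a,\cdot)$ matches the paper's (discriminant vanishes only at $a=0$), the bound $|M_{A,p}| \ll |A|^2$ is obtained the same way, and $L_F = 0$ since $q$ is linear in $c$ with coefficient $-1/(2\beta) \neq 0$. Plugging $s=3$ into Theorem~\ref{thm:main} then gives the stated bound. One small phrasing issue worth fixing: you write ``a rational affine change of coordinates,'' but a general rational affine map does not send circles to circles; what you actually need (and what works) is a rational \emph{similarity}, i.e.\ a translation composed with the rotational dilation above. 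With that clarified, the argument is complete and avoids the paper's symbolic computation entirely.
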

In particular, this gives the bound
\[
|T(\mathcal C_1,\mathcal C_2, \mathcal C_3)| \ll n^{5/3}
\]
for the balanced case when each of the family of pencils has cardinality $n$, thus improving \eqref{11over6}.

The set $\mathcal{C}_3$ does not appear in the upper bound in Corollary ~\ref{cor:triples}, and this omission can be interpreted as follows: suppose we have three families  of concentric circles that determine many (in the sense of Corollary~\ref{cor:triples}) triple points. Then at least one of the following must happen (i) the centres of the circles are collinear (ii) one centre has an irrational coordinate (iii) a positive proportion of the squares of radii in one family of circles is in $\mathbb{R}\setminus\mathbb{Q}$.

We can also prove the following more natural and quantitatively stronger result, the difference being that we must have rational radii.
\begin{corollary} \label{cor:triples2}

Assume the Uniformity Conjecture and suppose we have three families of concentric circles $\mathcal C_1, \mathcal C_2, \mathcal C_3$, centred at non-collinear rational points $p_1, p_2$ and $p_3$ respectively. Define $R_i$ to be the set of radii of the circles in $\mathcal C_i$ and suppose that $R_i \subset \mathbb Q$. Then
\[
|T(\mathcal C_1,\mathcal C_2, \mathcal C_3)| \ll |\mathcal C_1||\mathcal C_2|^{1/2}+ |\mathcal C_1|^{1/2}|\mathcal C_2|.
\]
\end{corollary}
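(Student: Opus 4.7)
The plan is to apply Theorem~\ref{thm:main} to a polynomial $F \in \mathbb{Q}[x,y,z]$ that encodes, in the three radii, when three circles with rational centres $p_i = (a_i, b_i)$ share a common intersection point. A common intersection $(X,Y)$ must satisfy $(X-a_i)^2 + (Y-b_i)^2 = r_i^2$ for each $i$; subtracting the first circle equation from the second and third gives two linear equations in $(X,Y)$, which are linearly independent by non-collinearity of the $p_i$, so Cramer's rule expresses $X, Y$ as affine-linear functions of $r_1^2, r_2^2, r_3^2$ with rational coefficients. Substituting back into the equation for circle $1$ produces $F(x,y,z) \in \mathbb{Q}[x,y,z]$ (with $x = r_1$, $y = r_2$, $z = r_3$) such that $F(r_1, r_2, r_3) = 0$ exactly when the three circles meet. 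Non-collinearity also forces the common intersection to be unique when it exists, so $|T(\mathcal{C}_1, \mathcal{C}_2, \mathcal{C}_3)| = |Z(F) \cap R_1 \times R_2 \times R_3|$.

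Collecting powers of $z$ gives $F = K z^4 + L(x,y) z^2 + (M(x,y) - x^2)$, where $K = \alpha_3^2 + \beta_3^2 \in \mathbb{Q}$ and $(\alpha_3, \beta_3)$ is the coefficient of $r_3^2$ in $(X, Y)$. A short Cramer's-rule computation shows that $(\alpha_3, \beta_3)$ is a nonzero vector perpendicular to $p_1 - p_2$, so $K \neq 0$. Dividing $F$ by $K$ and completing the square in $z^2$ yields
\[
K^{-1} F(x,y,z) = \left( z^2 + \frac{L(x,y)}{2K} \right)^2 - p(x,y) = q(x,y,z)^2 - p(x,y),
\]
with explicit $q \in \mathbb{Q}[x,y,z]$ and $p \in \mathbb{Q}[x,y]$, which is exactly the form required by Theorem~\ref{thm:main}. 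A direct calculation shows that the $y^4$-coefficient of $p$ is, up to a positive scalar, $(\alpha_2 \alpha_3 + \beta_2 \beta_3)^2 - (\alpha_2^2 + \beta_2^2)(\alpha_3^2 + \beta_3^2)$, which by Cauchy--Schwarz is strictly negative unless $(\alpha_2, \beta_2) \parallel (\alpha_3, \beta_3)$. But $(\alpha_2, \beta_2) \perp (p_1 - p_3)$ and $(\alpha_3, \beta_3) \perp (p_1 - p_2)$, so such parallelism would force the $p_i$ collinear. Hence $d_p = \deg_y p = 4$, and the smallest $s$ with $s d_p \geq 5$ is $s = 2$, producing the two main terms $|A||B|^{1/2} + |A|^{1/2}|B|$.

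It then remains to check the remaining hypotheses of Theorem~\ref{thm:main} and to absorb the lower-order terms in~\eqref{long}. The condition $\partial F/\partial z \neq 0$ is immediate from $K \neq 0$, and the same observation gives $L_F = 0$, since $F(a,b,z)$ is never identically zero in $z$. The discriminant $\operatorname{disc}_y p(x,y) \in \mathbb{Q}[x]$ is a fixed nonzero polynomial, so at most $O(1)$ elements of $A$ give $p(a,y)$ with a repeated root; these may be discarded. For the $M_{A,p}$ term, the defining condition for a pair $(a_1, a_2)$ is the vanishing of the resultant $R(a_1, a_2) := \operatorname{Res}_y(p(a_1, y), p(a_2, y))$, and assuming $R$ is not identically zero as a polynomial in $(a_1, a_2)$, for each fixed $a_1$ there are only $O(1)$ bad $a_2$, whence $|M_{A,p}| \ll |A|$ and $|M_{A,p}|^{1/2} |B| \ll |A|^{1/2} |B|$. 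Combining these inputs, Theorem~\ref{thm:main} applied with $A = R_1$, $B = R_2$, $C = R_3$ and $s = 2$ delivers the claimed bound.

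The main technical obstacle is the verification of the non-degeneracy conditions on $p$: specifically, that both $\operatorname{disc}_y p(x, y)$ and the resultant $R(a_1, a_2)$ (after dividing out the trivial diagonal factor forcing $R$ to vanish when $a_1 = a_2$) are not identically zero as polynomials. Both should hold on geometric grounds -- the two candidate values of $r_3^2$ at which the three circles meet correspond to the two intersection points of circles $1$ and $2$, which vary non-trivially with $(r_1, r_2)$ -- but a rigorous check requires careful manipulation of the explicit polynomial $p$ produced above.
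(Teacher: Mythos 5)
Your approach is, at bottom, the same as the paper's: eliminate the coordinates of the candidate triple point via Cramer's rule, land on a degree-$4$-in-$z$ polynomial in the radii, complete the square to write it as $q^2-p$ with $\deg_y p = 4$, and apply Theorem~\ref{thm:main} with $s=2$. The paper takes a shortcut by reusing the explicit $q,p$ already computed via SAGE in Corollary~\ref{cor:triples} for the squared-radii problem and substituting $X\mapsto X^2$, $Y\mapsto Y^2$, $Z\mapsto Z^2$; your direct derivation, and in particular the Cauchy--Schwarz/perpendicularity argument showing that the $y^4$-coefficient of $p$ is a nonzero rational constant precisely because $p_1,p_2,p_3$ are non-collinear, is a pleasant and more conceptual way to see why $d_p = 4$.

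However, as you yourself flag, there is a genuine gap: you never actually verify that $\operatorname{disc}_y p(x,y)$ is a nonzero polynomial in $x$, nor that $\operatorname{Res}_y(p(a_1,y),p(a_2,y))$, after removing the trivial diagonal factor, is not identically zero. These are exactly the hypotheses that make $M_{A,p}$ small and allow the repeated-root condition to be discharged, and without them Theorem~\ref{thm:main} does not apply and the $|M_{A,p}|^{1/s}|B|$ term cannot be absorbed. The paper closes this in the proof of Corollary~\ref{cor:triples}: the explicit formula for $p$ gives $\operatorname{disc}_Y p(x,Y) = 16x(a^2+1)$ and roots $(a^2+1+x)\pm 2\sqrt{x(a^2+1)}$; after the substitution $x\mapsto X^2$, $Y\mapsto Y^2$, the roots in $Y$ become $\pm\bigl(\sqrt{a^2+1}\pm X\bigr)$, and the common-root condition becomes $\pm X_1 \pm X_2 = \pm 2\sqrt{a^2+1}$, yielding $O(1)$ bad $X_2$ per $X_1$ and hence $|M_{R_1,p}|\ll|R_1|$. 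Your abstract resultant argument is correct in spirit, but ``should hold on geometric grounds'' is not a proof; to complete the argument you would need either the explicit computation or a direct geometric argument that for a fixed $r_1$ only $O(1)$ values of $r_1'$ produce a common intersection point of circles $1$ and $2$.
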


The reason we prove the slightly more unnatural Corollary \ref{cor:triples} is its application to the following distinct distances problem, which can be viewed as a strengthened form of the Erd\H{o}s pinned distance problem.

 Let $P \subset \mathbb R^2$ and let $p_1,p_2,p_3 \in \mathbb R^2$ be three non-collinear points. Let
\[
D(p_i,P):=\{ \|p_i -p \| : p \in P \}
\]
denote the set of distances between $p_i$ and elements of $P$. Can we prove that at least one of the sets $D(p_i,P)$ is large? The current state-of-the-art bound, due to Sharir and Solymosi \cite{SS}, states that
\[
\max \{ |D(p_1,P)|,  |D(p_2,P)|,  |D(p_3,P)| \} \gg |P|^{6/11}.
\]
Under the assumption of the Uniformity Conjecture, we can give an analogous bound for distances between \emph{two} rational points and a set of rational points.

\begin{corollary} \label{distances}
Assume the Uniformity Conjecture and let $P \subset \mathbb Q^2$ be finite. Then for any two points $p_1,p_2 \in \mathbb Q^2$ 
\begin{equation} \label{general}
|D(p_1,P)|^3|D(p_2,P)|^2 +  |D(p_1,P)|^2|D(p_2,P)|^3 \gg |P|^{3}.
\end{equation}

In particular 
\[
\max \{ |D(p_1,P)|,  |D(p_2,P)| \} \gg |P|^{3/5}.
\]
\end{corollary}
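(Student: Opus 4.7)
The plan is to reduce the distance-counting problem to Corollary~\ref{cor:triples} by introducing an auxiliary third family of concentric circles, exploiting the crucial feature that the bound in Corollary~\ref{cor:triples} is independent of $|\mathcal{C}_3|$. First I would assume $p_1\ne p_2$ (the degenerate case is handled by known pinned-distance bounds) and fix any rational point $p_3\in\mathbb{Q}^2$ not collinear with $p_1$ and $p_2$; such a point exists because $\mathbb{Q}^2$ contains infinitely many points off any given line. I then define
\[
\mathcal{C}_i := \{\,\text{circle centered at } p_i \text{ with radius } \|p_i-p\| : p\in P\,\},\qquad i=1,2,3.
\]
Because $p_i,p\in\mathbb{Q}^2$, each squared radius $\|p_i-p\|^2$ is a sum of two rational squares and hence rational, so the hypothesis $R_i^2\subset\mathbb{Q}$ of Corollary~\ref{cor:triples} is satisfied. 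By construction $|\mathcal{C}_i|=|D(p_i,P)|$.

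The key reduction is the inequality $|P|\le |T(\mathcal{C}_1,\mathcal{C}_2,\mathcal{C}_3)|$. Each $p\in P$ lies on exactly one circle from each family, giving a triple $(C_1,C_2,C_3)$ of circles with common point $p$, hence an element of $T(\mathcal{C}_1,\mathcal{C}_2,\mathcal{C}_3)$. To see this assignment is injective, recall that two distinct circles centered at $p_1$ and $p_2$ meet in at most two points, which are mirror images across the line $p_1p_2$; for both to also lie on a single circle centered at $p_3$ would force $p_3$ onto that line, contradicting non-collinearity. Therefore any triple of circles from the three families has at most one common point, so distinct $p,p'\in P$ yield distinct triples. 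Applying Corollary~\ref{cor:triples} now gives
\[
|P|\;\ll\;|D(p_1,P)|\,|D(p_2,P)|^{2/3}+|D(p_1,P)|^{2/3}\,|D(p_2,P)|.
\]
Cubing both sides and using $(X+Y)^3\ll X^3+Y^3$ for nonnegative $X,Y$ produces the first bound \eqref{general}. The second conclusion is then immediate: setting $M:=\max\{|D(p_1,P)|,|D(p_2,P)|\}$ bounds the right-hand side of \eqref{general} by $O(M^5)$, so $M\gg|P|^{3/5}$.

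I do not expect any genuine obstacle, since all the analytic and arithmetic-geometric work has been absorbed into Corollary~\ref{cor:triples}. The only non-routine checks are the rationality of the squared radii (which is automatic from $p_i,p\in\mathbb{Q}^2$) and the injectivity of the point-to-triple map, which is a short reflection argument. The real conceptual point of the proof is the recognition that the asymmetry of Corollary~\ref{cor:triples}, namely the absence of $|\mathcal{C}_3|$ from the right-hand side, is precisely what permits the introduction of an auxiliary third family of circles tailored to $P$ without incurring any size penalty.
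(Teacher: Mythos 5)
Your proposal is correct and follows essentially the same route as the paper: pick an auxiliary rational point $p_3$ not on the line $p_1p_2$, form the three families of circles centered at $p_1,p_2,p_3$ and covering $P$, note $|\mathcal C_i|=|D(p_i,P)|$ and $R_i^2\subset\mathbb Q$, observe $|P|\le|T(\mathcal C_1,\mathcal C_2,\mathcal C_3)|$, then apply Corollary~\ref{cor:triples} and rearrange. The one place where you add something the paper leaves implicit is the injectivity of the point-to-triple map (the reflection argument showing a triple of circles from non-collinear pencils has at most one common point), and that check is indeed what makes the inequality $|P|\le|T|$ legitimate; your explicit treatment of the degenerate case $p_1=p_2$ is also a reasonable addition, though note that the standard pinned-distance lower bound $\gg|P|^{1/2}$ does not by itself recover the stated $\gg|P|^{3/5}$, so in that case the result should be read as applying to distinct $p_1,p_2$ (which is anyway implicit in the paper, since ``the line through $p_1$ and $p_2$'' presupposes $p_1\ne p_2$).
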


As well as giving a quantitative improvement to the main result of \cite{SS}, Corollary \ref{distances} exposes a difference in behaviour between rational and real points; there exists a set of points $P \subseteq \R^2$ and two points $p_1,p_2 \in \R^2$ such that there are only $O(|P|^{1/2})$ distances from $p_i$ to $P$. Indeed, this is seen by taking two sets of $N$ concentric circles, letting $P$ be the set of two-rich points defined by these circles, and taking $p_1,p_2$ to be the centres of the circles.  We have $|P| = 2N^2$, and $|D(p_1,P)| = |D(p_2,P)| = N$. Corollary \ref{distances} shows that, assuming the Uniformity Conjecture, such constructions cannot occur in $\Q^2$.

We can use Corollary \ref{distances} to give bounds on the number of rational points defined by the intersection of two sets of concentric circles.
\begin{corollary} \label{rationalintersections}
Assume the Uniformity Conjecture, and let $\mathcal C_1$ and $\mathcal C_2$ be two sets of concentric circles with $|\mathcal C_1| = |\mathcal C_2| = N$, centred at points $p_1, p_2 \in \Q^2$ respectively. Let $\mathcal C_1 \cap \mathcal C_2$ denote the set of points lying on one circle from each family. Then the number of rational points in $\mathcal C_1 \cap \mathcal C_2$ is $O(N^{5/3})$.
\end{corollary}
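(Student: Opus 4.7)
The plan is to reduce Corollary~\ref{rationalintersections} to a direct application of Corollary~\ref{distances}. Let $P$ denote the set of rational points in $\mathcal C_1 \cap \mathcal C_2$, which is the object whose cardinality we want to bound. By hypothesis $P \subset \mathbb{Q}^2$ and the centres $p_1, p_2 \in \mathbb{Q}^2$, so Corollary~\ref{distances} is applicable to this configuration.

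The key observation is that the concentric structure forces both pinned distance sets to be small. Every point $p \in P$ lies on some circle of $\mathcal C_1$ centred at $p_1$, and the Euclidean distance $\|p_1 - p\|$ is exactly the radius of that circle. Since $|\mathcal C_1| = N$, this yields $|D(p_1, P)| \le N$, and by the same reasoning $|D(p_2, P)| \le N$.

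Now I would simply plug these upper bounds into the conclusion of Corollary~\ref{distances}, which gives
\[
|P|^3 \ll |D(p_1,P)|^3 |D(p_2,P)|^2 + |D(p_1,P)|^2 |D(p_2,P)|^3 \le 2 N^5.
\]
Taking cube roots yields $|P| \ll N^{5/3}$, which is the desired bound.

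There is essentially no obstacle here: the entire argument is a one-line pigeonhole observation composed with the previously established Corollary~\ref{distances}. The only minor points to verify are that $p_1 \ne p_2$ (so that the two families are genuinely distinct, which is implicit in the statement) and that no rationality issue arises in appealing to Corollary~\ref{distances} — the latter is immediate because that corollary only requires $p_1, p_2 \in \mathbb{Q}^2$ and $P \subset \mathbb{Q}^2$, not any rationality of the distances themselves. The substantive work has already been done upstream in Corollary~\ref{distances} (and ultimately in Theorem~\ref{thm:main}); this corollary just repackages that result for a natural geometric setting.
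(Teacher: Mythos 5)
Your argument is correct and is essentially identical to the paper's proof: both reduce Corollary~\ref{rationalintersections} to Corollary~\ref{distances} by observing that the concentric structure forces $|D(p_i,P)| \le N$, and then a direct substitution gives $|P| \ll N^{5/3}$. The only cosmetic difference is that the paper applies the bound in the equivalent rearranged form $|D(p_1,P)|^{2/3}|D(p_2,P)| + |D(p_1,P)||D(p_2,P)|^{2/3} \gg |P|$, whereas you substitute into \eqref{general} directly and take cube roots.
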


For further applications of Uniformity-type conjectures to problems of combinatorial geometry, see, for instance, Shaffaf \cite{shaffaf} and Tao \cite{tao}, who independently used the Bombieri-Lang conjecture 
to give a negative answer to the Erd\H{o}s-Ulam problem \cite{erdosulam}: is there a dense (with respect to the Euclidean topology) subset $S$ of the real plane so that the distances between any two points in $S$ is a rational number? See also van Luijk \cite{vanluijk} for an application of the Bombieri-Lang conjecture to counting perfect cuboids. To our knowledge, \cite{vanluijk} was the first work to apply the Bombieri-Lang conjecture to discrete geometry.

\subsection{Applications: Arithmetic Combinatorics}

Theorem \ref{thm:main} can be naturally applied to give an Elekes-R\'{o}nyai result for a family of polynomials in $\mathbb{Q}[x,y]$. Similar to the situation with Theorem \ref{thm:main}, an interesting feature of this family is that it contains polynomials that expand over the rationals, but \emph{not} over the reals. In addition, the (conditional) rate of expansion is strong: for certain $f(x,y)\in \mathbb{Q}[x,y]$ of degree $d$ we can show optimal expansion, that $|\{f(a,b)\colon a\in A, b\in B\}|\gg_d |A||B|$ for any $A,B\subseteq \mathbb{Q}$. We prove the following.

\begin{corollary} \label{Cor:expanders}
Assume the Uniformity Conjecture, and let $h(x)$ and $g(x,y)$ be rational polynomials of degree at most $d$, such that $\frac{\partial g}{\partial y},\frac{dh}{dx}  \neq 0$, and for any $t \in \{ h(a) + g(a,b)^2 : a\in A, b \in B \}$, $t - h(x)$ has no repeated roots. Define $F(x,y) = h(x) + g(x,y)^2$. Then for any finite sets $A,B \subseteq \Q$ with $|A| \leq |B|$, we have
$$\left| F(A,B)\right| \gg_d |B||A|^{1/s}$$
where $s$ is the smallest integer such that $s \deg(h) \geq 5$.
\end{corollary}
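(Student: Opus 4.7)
The plan is to recast this expander problem as an Elekes--Szab\'o incidence count and apply Theorem \ref{thm:main}. Introduce the trivariate polynomial
$$\hat F(u,x,y) := g(x,y)^2 - \bigl(u - h(x)\bigr),$$
regarded with $u$ as the first variable, $x$ the second, and $y$ the third. In this form $\hat F = q(u,x,y)^2 - p(u,x)$ with $q(u,x,y) = g(x,y)$ and $p(u,x) = u - h(x)$, so $\deg_x p = \deg(h)$ and the parameter $s$ in Theorem \ref{thm:main} matches the one in the corollary. The derivative condition $\partial \hat F/\partial y = 2g\,(\partial g/\partial y) \not\equiv 0$ holds by hypothesis, and the requirement that $t - h(x)$ have no repeated roots for every $t \in F(A,B)$ is precisely the non-degeneracy hypothesis required by the theorem. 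Writing $T := F(A,B)$, every point $(F(a,b),a,b)$ lies on $Z(\hat F)$, so $|Z(\hat F) \cap (T\times A\times B)| \geq |A||B|$, and Theorem \ref{thm:main} delivers
$$|A||B| \ll_{s,d} |T|\,|A|^{1-1/s} + |T|^{1-1/s}|A| + |M_{T,p}|^{1/s}|A| + L_{\hat F}\,|B|.$$

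The two error terms vanish or are harmless. If $u_1,\dots,u_s \in T$ are distinct, then a common root $\alpha \in \C$ of $u_i - h(x)$ and $u_j - h(x)$ would force $u_i = h(\alpha) = u_j$, contradicting distinctness; hence $M_{T,p} = \emptyset$. For $L_{\hat F}$, the identity $\hat F(u,a,y) \equiv 0$ in $y$ forces $g(a,y)^2$, and hence $g(a,y)$ itself, to be constant in $y$; since $\partial g/\partial y \not\equiv 0$ this happens for at most $d$ values of $a$, each determining a unique $u$, so $L_{\hat F} \ll d$. Whenever $|A|$ exceeds a suitable $d$-dependent constant, the $L_{\hat F}\,|B|$ term is absorbed into $|A||B|$; in the remaining regime $|A| = O_d(1)$ the corollary is established directly by fixing some $a_0 \in A$ with $g(a_0,\cdot)$ non-constant and noting that $b \mapsto g(a_0,b)^2 + h(a_0)$ produces $\gg_d |B|$ distinct outputs.

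It remains to solve $|A||B| \ll_{s,d} |T|\,|A|^{1-1/s} + |T|^{1-1/s}|A|$. If the first summand dominates, then rearranging yields $|T| \gg |B|\,|A|^{1/s}$, which is the claim. If the second dominates, then $|T| \gg |B|^{s/(s-1)}$; since $|A| \leq |B|$ and $1/s \leq 1/(s-1)$, one has $|A|^{1/s} \leq |B|^{1/(s-1)}$, so $|B|\,|A|^{1/s} \leq |B|^{s/(s-1)} \ll |T|$, and the same bound follows (the boundary case $s = 1$, corresponding to $\deg(h) \geq 5$, gives $|T| \gg |A||B|$ immediately). The only real subtlety in this argument is choosing the correct permutation of the three variables so that $F(A,B)$ occupies the first coordinate slot of Theorem \ref{thm:main} and $p$ depends only on the first two variables; the remainder is routine verification of hypotheses and control of the lower-order terms.
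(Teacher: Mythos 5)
Your proof is correct and follows essentially the same route as the paper: the paper applies Theorem~\ref{thm:main} to $F(u,v,w)=h(v)-u+g(v,w)^2$ on $E\times A\times B$ with $E=F(A,B)$, establishes $M_{E,p}=\emptyset$ and $L_F\ll d$ by the same reasoning, and then does the same three-way case analysis on which term of the resulting inequality dominates. Your treatment of the case where $|T|^{1-1/s}|A|$ dominates (passing directly from $|T|\gg|B|^{s/(s-1)}$ and $|A|\leq|B|$ to the claimed bound) is a touch cleaner than the paper's (which concludes $|B|\ll 1$), and your criterion for $a_0$ in the small-$|A|$ regime ($g(a_0,\cdot)$ non-constant in $y$) is actually the right one; note, as the paper does in its restated Corollary, that one needs $|A|$ above a $d$-dependent threshold for such $a_0$ to exist, a hypothesis that should be stated explicitly.
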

In the preprint \cite{Solymosi}, Solymosi observed that the Uniformity Conjecture can be used towards refining the qualitative aspect of the Elekes-R\'onyai problem. That is, refining the families of polynomials in $\mathbb{Q}[x,y]$ that expand. The focus in Corollary \ref{Cor:expanders} is on the quantitative aspect, dealing with the rate of expansion.

Corollary \ref{Cor:expanders} 
can be used to give results concerning the sum set of a set of squares. In particular, it can be used to reprove the bound
\begin{equation} \label{sums}
    |A+A| \gg |A|^{4/3}
\end{equation}
for any set $A$ of rational squares, a result which first appeared in \cite{CillerueloGranville} and was recently improved slightly in \cite{ShkredovSolymosi}. These results are related to a conjecture of Rudin \cite{Rudin}, which states that the number of squares in an arithmetic progression $A$ is $O(|A|^{1/2})$. Current progress for this conjecture is given by Bombieri and Zannier \cite{BombieriZannier} who proved that $A$ can contain at most $|A|^{3/5 + \epsilon}$ squares. Shkredov and Solymosi used the Uniformity Conjecture to extend this result, allowing $A$ to be a generalised arithmetic progression, see \cite[Theorem 1]{ShkredovSolymosi}. With a simple application of \eqref{sums}, we are able to generalise this further to sets with small sum set, with a good dependence on the doubling of $A$.

\begin{corollary}\label{Cor:Rudin}
Assume the Uniformity Conjecture, and let $A \subseteq \Q$ be a set such that $|A+A| \leq K|A|$ for some $K$. Let $B$ be the set of square numbers in $A$. Then we have
$$|B| \ll (K|A|)^{3/4}.$$
\end{corollary}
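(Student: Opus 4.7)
The plan is to combine the sum-set lower bound \eqref{sums} for sets of rational squares with the hypothesis that $A$ has small doubling. The argument should be very short: since $B \subseteq A$, we immediately have the containment $B+B \subseteq A+A$, and hence the upper bound $|B+B| \leq |A+A| \leq K|A|$ is inherited from the small doubling hypothesis on $A$.

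On the other hand, $B$ is by definition a set of rational squares, so \eqref{sums} applies to $B$ and yields the lower bound $|B+B| \gg |B|^{4/3}$. Stacking the two estimates gives
\[
|B|^{4/3} \ll |B+B| \leq K|A|,
\]
and raising both sides to the power $3/4$ produces the claimed bound $|B| \ll (K|A|)^{3/4}$.

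There is essentially no obstacle here, since all the genuine work sits inside \eqref{sums} (which in turn is the Cilleruelo--Granville bound, recoverable from Corollary~\ref{Cor:expanders} by choosing $h(x)=x$ and $g(x,y)=y$, so that $F(x,y)=x+y^2$ and $F(B^{1/2},B^{1/2})=B+B$). The only sanity check worth noting is that \eqref{sums} is stated for rational squares with no further hypothesis, so no extra non-degeneracy condition on $B$ needs to be verified; in particular $B$ is allowed to be an arbitrary subset of the squares inside $A$, and the monotonicity $B+B\subseteq A+A$ is enough to transfer the small-doubling constraint from $A$ to $B$.
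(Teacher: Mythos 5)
Your proof is correct and essentially identical to the paper's: both run the chain $|B|^{4/3}\ll |B+B|\le |A+A|\le K|A|$ using the Cilleruelo--Granville bound \eqref{sums} for the set of squares $B\subseteq A$, and then solve for $|B|$. One small slip in your parenthetical aside: to recover \eqref{sums} from Corollary~\ref{Cor:expanders} you should take $h(x)=x^2$ and $g(x,y)=y$ (so $F(x,y)=x^2+y^2$, $F(B^{1/2},B^{1/2})=B+B$, $\deg h=2$ and hence $s=3$, giving $|B+B|\gg |B^{1/2}|\cdot|B^{1/2}|^{1/3}=|B|^{4/3}$); your choice $h(x)=x$ gives $F(B^{1/2},B^{1/2})=B^{1/2}+B$ rather than $B+B$, and with $\deg h=1$ forces $s=5$, yielding a weaker exponent.
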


Theorem \ref{thm:main} also allows us to give information on the number of three term arithmetic progressions ($3$APs) in sets of squares. It is well known that no four square numbers can be in arithmetic progression; therefore any set of rational squares contains no (non-trivial) $4$APs. 
One might expect that a set containing no $4$APs cannot contain too many $3$APs, however this is false: a result of Fox and Pohoata \cite{PohoataFox} implies that a set with no $4$APs may still contain more than $|A|^{2-o(1)}$ $3$APs. We prove the following.
\begin{corollary}\label{3APs}
Assume the Uniformity Conjecture, let $A\subseteq \Q$ be a set of rational squares.
Then the number $AP_3(A)$ of three term arithmetic progressions in $A$ satisfies
$$AP_3(A) \ll |A|^{5/3}.$$
\end{corollary}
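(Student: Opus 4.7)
The plan is to convert the $3$AP counting problem into an instance of Theorem~\ref{thm:main}. Since $A \subseteq \Q$ consists of rational squares, I would introduce $S := \{\sqrt{a} : a \in A,\ a \geq 0\} \subseteq \Q$; the map $(\alpha, \beta, \gamma) \mapsto (\alpha^2, \beta^2, \gamma^2)$ is then a bijection from $S^3$ onto $A^3$, and $(a,b,c) \in A^3$ is a $3$AP iff its preimage satisfies $\alpha^2 - 2\beta^2 + \gamma^2 = 0$. Hence, after removing $0$ from $A$ (which discards at most $O(|A|)$ trivial $3$APs containing $0$), it suffices to bound $|Z(F) \cap S \times S \times S|$ for
\[
F(x,y,z) := z^2 - 2y^2 + x^2.
\]

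This $F$ lies in the family addressed by Theorem~\ref{thm:main}: take $q(x,y,z) := z$ and $p(x,y) := 2y^2 - x^2$, so that $F = q^2 - p$, $\partial F / \partial z = 2z \neq 0$, and $d_p = \deg_y(p) = 2$. The smallest integer $s$ with $s\, d_p \geq 5$ is $s = 3$, so the leading term produced by Theorem~\ref{thm:main} in the balanced setting is $|S| \cdot |S|^{2/3} = |S|^{5/3} = |A|^{5/3}$, which matches the desired exponent.

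Before reading off the bound I would verify the ancillary conditions. For each $a \in S \setminus \{0\}$ the polynomial $p(a,y) = 2y^2 - a^2$ has two distinct roots $\pm a/\sqrt{2}$, so the no-repeated-root hypothesis holds. Next, $L_F = 0$: the specialisation $F(a,b,z) = z^2 + (a^2 - 2b^2)$ has leading coefficient $1$ in $z$ and so is never identically zero. Most critically, $M_{S,p}$ is empty, because a common root in $\C$ of $p(a_i,y)$ and $p(a_j,y)$ forces $a_i^2 = a_j^2$ in $\Q$, hence $a_j = \pm a_i$, and distinct elements of $S \setminus \{0\} \subseteq \Q_{>0}$ cannot be negatives of each other.

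Plugging these vanishings into \eqref{long} then yields $|Z(F) \cap S^3| \ll |S|^{5/3}$, and combining with the $O(|A|)$ error from discarding $0$ completes the argument. The only step requiring thought is the choice of branch: working with both signed square roots would populate $M_{S,p}$ with pairs $\{a,-a\}$ and force one to bound the $|M_{S,p}|^{1/3}|S|$ term by hand (still $\ll |S|^{5/3}$, but less cleanly), whereas restricting to $S \subseteq \Q_{\geq 0}$ both sets up the bijection with $3$APs and makes $M_{S,p}$ empty in one stroke. I do not anticipate any serious obstacle beyond this bookkeeping.
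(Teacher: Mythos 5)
Your proof is correct and takes essentially the same route as the paper: both pass to the set of positive square roots and apply Theorem~\ref{thm:main} with $q=z$, $d_p=2$, $s=3$, after checking that $M$ and $L_F$ vanish. The only difference is cosmetic: you place the middle term of the $3$AP in the $y$-slot via $F(x,y,z)=z^2-2y^2+x^2$ with $p(x,y)=2y^2-x^2$, while the paper places it in the $z$-slot via $F(x,y,z)=z^2-\tfrac{1}{2}(x^2+y^2)$ with $p(x,y)=\tfrac{1}{2}(x^2+y^2)$; since all three factors of the Cartesian product are the same set, this choice is immaterial.
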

This improves upon a bound of $O(|A|^{7/4})$ (also conditional on the Uniformity Conjecture) which follows immediately from an application of H\"{o}lder's inequality along with the optimal energy\footnote{See the forthcoming definition in \eqref{energy}} bound $\E_4(A) \ll |A|^{4}$ given in \cite{ShkredovSolymosi}. Indeed, we have 
$$AP_3(A) = \sum_{x \in 2A}r_{A+A}(x) \leq |A|^{3/4}\E_4(A)^{1/4} \ll|A|^{7/4}.$$
An unconditional non-trivial bound $AP_3(A)=o(|A|^2)$ for sets of squares follows from the work of Balog and Szemer\'{e}di \cite{BS} (see also \cite[Theorem 1.7]{Elekes2}), and a more precise unconditional quantitative bound can be recovered from the upper bound in \cite[Theorem 1.2]{PohoataFox}. 
It should be noted that there exists a set $A$ of rational squares containing $\Omega(|A|\log|A|)$ $3$APs. Indeed, the first $n$ square numbers contain $\Omega(n\log n)$ Pythagorean triples (see \cite{Nowak}). Each solution to $a^2 + b^2 = c^2$ with $a^2,b^2,c^2$ with $a \geq b$ in the first $n$ squares gives a triple $(a-b)^2,c^2,(a+b)^2$, which is a three term arithmetic progression contained in the first $2n$ squares. Therefore the set of the first $2n$ square numbers contains $\Omega(n \log n)$ $3$APs. If $A$ is a set of $k$th powers for $k \geq 3$, stronger unconditional results are known for $AP_3(A)$: from a result of Darmon and Merel \cite{DarmonMerel}, one can deduce that $AP_3(A)\ll |A|$. 

As a further application of Conjecture \ref{uniformityconjecture}, we consider the additive energy of sets $A$ and $B$ where $A$ is a set of rational squares, and $B$ is a set of rational $k$th powers. The \textit{additive energy} of $A$ and $B$, denoted $\E(A,B)$, is the number of solutions to the equation $a-b=a'-b'$ such that $(a,a',b,b') \in A^2 \times B^2$. This can also be written as
\[
\E(A,B)= \sum_{x \in A-B} r_{A-B}^2(x)
\]
where $r_{A-B}(x):=|\{(a,b) \in A \times B : a-b= x\}|$. Given a real number $l >0$, we can generalise this to the $l$th order additive energy of $A$ and $B$, which is defined as
\begin{equation} \label{energy}
\E_l(A,B)= \sum_{x \in A-B} r_{A-B}^l(x).
\end{equation}

We write $\E_l(A,A) = \E_l(A)$. Shkredov and Solymosi \cite{ShkredovSolymosi}, under the Uniformity Conjecture, were able to show that $\E(A)= o(|A|^{8/3})$ for $A\subseteq \mathbb{Q}$ a set of squares and $\E(A)=o(|A|^{5/2})$ for $A$ a set of cubes. If $A$ is a set of $k$th powers for $k\geq 4$, they remark that the same technique yields an optimal energy bound $\E(A) = O(|A|^2)$. We extend their result to the asymmetric case $\E(A,B)$, where $A$ is a set of squares and $B$ is a set of $k$th powers for $k \geq 5$. 
\begin{corollary} \label{energybounds}
Let $A$ be a set of rational squares, and $B$ be a set of rational $k$th powers, for $k \geq 5$. Let $l>0$ be a real number. Then assuming the Uniformity Conjecture, we have
$$\E_l(A,B) \ll_k |A-B| + |A \cap B|^l.$$
\end{corollary}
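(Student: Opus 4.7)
The plan is to obtain a pointwise bound on the representation function $r_{A-B}(x)$ via the Uniformity Conjecture, and then sum. I split
\[
\E_l(A,B) = r_{A-B}(0)^l + \sum_{\substack{x \in A-B \\ x \neq 0}} r_{A-B}(x)^l,
\]
noting that the first term equals $|A \cap B|^l$, which matches one of the terms on the right-hand side of the claim.

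The crux is to show $r_{A-B}(x) \ll_k 1$ uniformly in $x \neq 0$. Because $A$ consists of rational squares and $B$ of rational $k$th powers, I can choose $\alpha_a, \beta_b \in \Q$ with $\alpha_a^2 = a$ and $\beta_b^k = b$ for each $a \in A$ and $b \in B$. Every representation $a - b = x$ then yields a distinct rational point $(\alpha_a, \beta_b)$ on the affine curve
\[
\cC_x : y^2 = z^k + x,
\]
so $r_{A-B}(x) \leq |\cC_x(\Q)|$. For $x \neq 0$ the polynomial $z^k + x$ has degree $k \geq 5$, and its derivative $kz^{k-1}$ vanishes only at $z=0$, where $z^k + x = x \neq 0$; hence $z^k + x$ has no repeated roots in $\C$. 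By Definition~\ref{def:hyperelliptic}, $\cC_x$ is then a hyperelliptic curve of genus $g = \lfloor (k-1)/2\rfloor \geq 2$, so Conjecture~\ref{uniformityconjecture} provides $|\cC_x(\Q)| \leq B_g$, a constant depending only on $k$.

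Combining these observations,
\[
\E_l(A,B) \leq |A\cap B|^l + B_g^{\, l}\,|A-B| \ll_k |A \cap B|^l + |A-B|,
\]
which is the desired bound (with the implicit constant also absorbing the fixed exponent $l$). The only genuine subtlety is verifying the non-singularity of the family $\cC_x$ and that $k \geq 5$ delivers $g \geq 2$, both of which are immediate; beyond that the argument is a direct application of the Uniformity Conjecture and does not invoke the machinery of Theorem~\ref{thm:main}.
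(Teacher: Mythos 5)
Your proof is correct and follows essentially the same route as the paper: split off the $x=0$ term (which gives $|A\cap B|^l$), then bound $r_{A-B}(x)$ for $x\neq 0$ by counting rational points on the hyperelliptic curve $y^2 = z^k + x$ via the Uniformity Conjecture. The paper uses exactly this curve (written as $m = Y^2 - X^k$); your write-up is slightly more careful in explicitly verifying non-singularity of the curve and injectivity of the map to rational points, but the idea and the decomposition are identical.
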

An important variant of the sum-product problem is that of products of additive shifts: given sets $A$ and $B$ over some field $\mathbb{F}$, is it true that at least one of $AB$ or $(A+\alpha)(B+\beta)$ must be large, for some additive shifts $\alpha$ and $\beta$? See, for instance, \cite{GS}, \cite{HRNZ} and \cite{Shkredov} for work on questions of this type. We consider this problem when the product set and shifted product set are constructed relative to a graph, obtaining the following result.
\begin{corollary} \label{cor:sp}
Assume the Uniformity Conjecture, let $A,B \subset  \mathbb Q \setminus \{0\} $, $\alpha, \beta \in \mathbb Q \setminus \{ 0 \}$, and $G \subset A \times B$. Then
\[
|A\cdot_G B| + |(A+\alpha) \cdot_G (B+\beta)| \gg |E(G)|^{3/5}.
\]
\end{corollary}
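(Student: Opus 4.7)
The plan is to set up an Elekes--Szab\'o problem that records the two quadratic relations $u = ab$ and $v = (a+\alpha)(b+\beta)$, and then apply Theorem~\ref{thm:main}. Put $U := A \cdot_G B$ and $V := (A+\alpha)\cdot_G (B+\beta)$, and to each edge $(a,b) \in G$ associate the triple $(u,v,a) := (ab,(a+\alpha)(b+\beta),a) \in U \times V \times A$. Because $a \neq 0$, the map $(a,b)\mapsto (u,v,a)$ is injective (one recovers $b = u/a$), so $|E(G)|$ is bounded by the number of points of $U \times V \times A$ lying on the zero set of any polynomial satisfied by these triples.

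To obtain such an $F$ in the special form $q(x,y,z)^2 - p(x,y)$, I eliminate $b$ from the defining equations: substituting $b = u/a$ into $v = (a+\alpha)(b+\beta)$ and clearing the denominator gives the quadratic $\beta a^2 - a(v - u - \alpha\beta) + \alpha u = 0$, and completing the square yields
\[
(2\beta a - v + u + \alpha\beta)^2 \;=\; (v - u - \alpha\beta)^2 - 4\alpha\beta u \,.
\]
Thus, with $(x,y,z) = (u,v,a)$, the polynomial
\[
F(x,y,z) \;=\; (2\beta z - y + x + \alpha\beta)^2 \,-\, \bigl[(y - x - \alpha\beta)^2 - 4\alpha\beta x \bigr]
\]
has the required shape, with $q(x,y,z) = 2\beta z - y + x + \alpha\beta$, $p(x,y) = (y - x - \alpha\beta)^2 - 4\alpha\beta x$, and $\partial F/\partial z \not\equiv 0$. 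Since $d_p = \deg_y p = 2$, the parameter $s$ of Theorem~\ref{thm:main} equals $3$, so the leading terms in the resulting estimate will be of order $|U||V|^{2/3} + |U|^{2/3}|V|$, and the job reduces to absorbing the error terms.

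It then remains to verify the hypotheses of Theorem~\ref{thm:main} and to bound $|M_{U,p}|$ and $L_F$. The roots of $p(u,y) = 0$ in $y$ are $u + \alpha\beta \pm 2\sqrt{\alpha\beta u}$, which are distinct whenever $u \neq 0$; since $U$ consists of products of nonzero rationals, $p(u,y)$ has no repeated roots for $u \in U$. For $|M_{U,p}|$, note that $p(x,y)$ is symmetric in its two arguments, so if $p(u_1,\cdot)$ and $p(u_2,\cdot)$ share a root $y_0 \in \mathbb{C}$ then both $u_1$ and $u_2$ are roots of the quadratic $p(\cdot,y_0)$; since $p(u_1,y)$ has only two roots, this leaves at most two choices of $u_2 \neq u_1$ for each $u_1$, whence $|M_{U,p}| \ll |U|^2$ and therefore $|M_{U,p}|^{1/3}|V| \ll |U|^{2/3}|V|$. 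Also $L_F = 0$: the polynomial $F(u,v,z)$ has leading coefficient $4\beta^2 \neq 0$ in $z$ and so is never identically zero. Hence Theorem~\ref{thm:main} gives
\[
|E(G)| \;\ll\; |U||V|^{2/3} + |U|^{2/3}|V| \;\ll\; (|U|+|V|)^{5/3},
\]
and rearranging yields $|U|+|V| \gg |E(G)|^{3/5}$, as desired.

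The only real design step in this plan is the choice of polynomial $F$: one must massage the natural relation between $ab$ and $(a+\alpha)(b+\beta)$ into the precise shape $q^2-p$ demanded by Theorem~\ref{thm:main}, which is exactly what the elimination of $b$ and the subsequent completion of the square achieve. Once $F$ is in hand, the symmetry $p(x,y) = p(y,x)$ makes the bounds on $|M_{U,p}|$ and on the repeated-roots condition essentially immediate, and the remainder is bookkeeping.
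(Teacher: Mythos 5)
Your proof is correct and follows essentially the same route as the paper: set up the system $u=ab$, $v=(a+\alpha)(b+\beta)$, eliminate one of the variables of the edge to land on a polynomial of the required shape $q^2-p$ with $\deg_y p=2$, then verify the repeated-root and $M_{U,p}$, $L_F$ bookkeeping and apply Theorem~\ref{thm:main} with $s=3$. The only cosmetic differences are that you eliminate $b$ and work with triples $(u,v,a)\in U\times V\times A$ whereas the paper eliminates $a$ and works with $(c,d,b)\in C\times D\times B$, and that you clear denominators in $p$; your observation that $p$ is symmetric in $x,y$ is a slightly slicker way to bound $|M_{U,p}|$ than the paper's explicit root analysis, but the estimate and constant are the same up to this symmetry.
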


Corollary \ref{cor:sp} is an analogue of \eqref{spgraphs} for products and shifted products, and is the first non-trivial bound for this problem over sparse graphs. Similar results over dense graphs can be found in \cite{RNW}.

\section{Proof of Theorem \ref{thm:main}}

We begin by removing some bad elements from $A$, to be dealt with separately. We write the polynomial $p(x,y)$ as a univariate polynomial in $(\Q[x])[y]$ (recall that $d_p := \deg_y(p)$):
$$p(x)(y) := \sum_{i=1}^{d_p} r_i(x)y^i.$$
We define  $R = \{ a \in A : r_{d_p}(a) = 0 \}$. Elements of $R$ have the undesirable effect of reducing the degree of $p(x)(y)$. We have
\begin{equation}\label{eq:split}|Z(F) \cap A \times B \times C| =  |Z(F) \cap (A \setminus R) \times B \times C| + |Z(F) \cap R \times B \times C|
\end{equation}
and we begin by bounding the first term. Let $ A' := A \setminus R$, and note that since $r_{d_p}(x)$ is a univariate polynomial of degree at most $d$, we have $|R| \leq d$ and so $|A'|\geq |A|- d$.

Define a bipartite graph $G = (V,E)$ on $A' \times B$ as follows: a pair $(a,b) \in A' \times B$ forms an edge if there exists some $c \in C$ such that $F(a,b,c) = 0$. To each edge $e = (a,b) \in E$ we assign a weight $w(e)$, defined as 
$$w((a,b)) = \left| \left\{ c \in C : F(a,b,c)=0 \right\} \right|$$
so that
$$|Z(F) \cap A' \times B \times C| = \sum_{e \in E} w(e).$$

Note that for $(a,b,c) \in A' \times B \times C$ with $F(a,b,c) = 0$, we have that either $H_{a,b}(z) := F(a,b,z)$ is identically zero, or there are at most $d$ values of $c \in C$ with $H_{a,b}(c) = 0$. Therefore, for each edge $e = (a,b) \in E$ we either have $w(e) \leq d$, or $w(e) = |C|$. Recall the notation $$L_F:=|\{(a,b)\in A\times B\colon F(a,b,z)\equiv 0\}|.$$

We then have
\begin{equation}\label{bound1} |Z(F) \cap A' \times B \times C| = \sum_{e \in E} w(e) \leq d|E| + L_F|C|.\end{equation}

We now aim to find a bound on $|E|$. Let $s$ be the smallest integer such that $sd_p\geq 5$. Then by H\"{o}lder's inequality
\begin{equation} \label{edges}
|E|^s= \left( \sum_{b \in B} \deg(b) \right )^s \leq |B|^{s-1} \left ( \sum_{b \in B} \deg^s(b) \right)= |B|^{s-1} \sum_{a_1, \dots, a_s \in A'} |N_G(a_1) \cap \dots \cap N_G(a_s)|.
\end{equation}
The remaining task is to bound the quantity
\[
S:=\sum_{a_1, \dots, a_s \in A'} |N_G(a_1) \cap \dots \cap N_G(a_s)|.
\]
Split this sum into three parts by writing
\begin{equation} \label{decompose}
S=S_1+ S_2 +S_3,
\end{equation}
where
\begin{itemize}
    \item $S_1$ counts the contributions to $S$ for which $a_1,\dots,a_s$ are all distinct and for all $i \neq j$ there does not exist any solution $y_0 \in \mathbb C$ to the equation $p(a_i,y_0)=p(a_j,y_0)=0$,
    \item $S_2$ counts the contributions to $S$ for which $a_1,\dots,a_s$ are all distinct and there exists $i \neq j$ and some $y_0 \in \mathbb C$ such that $p(a_i,y_0)=p(a_j,y_0)=0$,
    \item $S_3$ counts the contributions to $S$ for which $a_1,\dots,a_s$ are not pairwise distinct.
\end{itemize}

First, let us bound $S_1$. Fix distinct $a_1,\dots,a_s \in A'$ such that there does not exist any solution to the equation $p(a_i,y)=p(a_j,y)=0$. We define the auxiliary polynomial
\begin{equation} \label{aux1}
Y^2 = \prod_{i=1}^s p(a_i,X),
\end{equation}
which has degree at least five. Indeed, by the definition of $A'$, each $p(a_i,X)$ has degree $d_p$, and we also have the condition that $s d_p \geq 5$. Furthermore, by our assumption that we are considering contributions to $S_1$, as well as the hypothesis that the roots of $p(a_i,X)$ are distinct, it follows that the roots of
\[
\prod_{i=1}^s p(a_i,X)
\]
are all distinct. Therefore equation \eqref{aux1} gives a hyperelliptic curve of genus $g:= \lfloor \frac{s d_p-1}{2} \rfloor \geq 2$, and the Uniformity Conjecture asserts that this curve has at most $B_g$ rational points. However, if $ b \in N_G(a_1) \cap \dots \cap N_G(a_s)$ then we have
\[
F(a_i,b,c_i)=0
\]
for some $c_1,\dots,c_s \in C$. By construction we see that
$$\left( b, \prod_{i=1}^s q(a_i,b,c_i) \right)$$
is a rational solution to equation \eqref{aux1}. It therefore follows that $|N_G(a_1) \cap \dots \cap N_G(a_s)| \leq B_g$ and thus
\begin{equation} \label{S1bound}
S_1 \ll |A|^s.
\end{equation}

We now turn to the term $S_2$. By the definition of the set $M_{A,p}$,
\[
S_2= \sum_{(a_1,\dots,a_s) \in M_{A,p}} |N_G(a_1) \cap \dots \cap N_G(a_s)|.
\]
Therefore, the trivial bound 
\begin{equation} \label{trivial}
|N_G(a_1) \cap \dots \cap N_G(a_s)| \leq |B|
\end{equation}
implies that
\begin{equation} 
\label{S2bound}
S_2 \leq |M_{A,p}||B|.
\end{equation}

To bound $S_3$, we simply observe that the number of valid $s$-tuples which may contribute to this sum is at most $\binom{s}{2}|A|^{s-1}$. Therefore, another application of \eqref{trivial} implies that
\begin{equation} \label{S3bound}
S_3 \ll_s|A|^{s-1}|B|.
\end{equation}
Combining \eqref{decompose}, \eqref{S1bound}, \eqref{S2bound} and \eqref{S3bound} yields
\[
S \ll_s |A|^s + |M_{A,p}||B| + |A|^{s-1}|B|.
\]
It then follows from \eqref{edges} that
\[
|E| \ll_s |A||B|^{1- 1/s} + |M_{A,p}|^{1/s}|B| + |A|^{1-1/s}|B|
\] 
and so by equation \eqref{bound1} we have 
\begin{equation} \label{eq:endresult}
    |Z(F) \cap A' \times B \times C| \ll_{s,d} |A||B|^{1-1/s} +|M_{A,p}|^{1/s}|B|+ |B||A|^{1-1/s} + L_F|C|.
\end{equation}

We now deal with the second term in \eqref{eq:split}. Suppose we have a triple $(a,b,c) \in R \times B \times C$ such that $F(a,b,c) = 0$. The univariate polynomial $H_{a,b}(z) := F(a,b,z)$ is either identically zero, or has at most $d$ zeroes. There are at most $L_F$ pairs $(a,b)$ where the first case occurs, giving at most $L_F|C|$ zeroes. On the other hand, recalling that $|R| \leq d$, there are at most $d^2|B|$ zeroes  $(a,b,c)$ where $H_{a,b}(z)$ is not identically zero, since there are $d|B|$ pairs $(a,b) \in R \times B$, each giving at most $d$ zeroes. Putting this together, we have
\begin{equation} \label{2ndterm}
|Z(F) \cap R \times B \times C| \ll_d |B| + L_F|C|.
\end{equation}
Combining \eqref{2ndterm}, \eqref{eq:endresult} and \eqref{eq:split} completes the proof. \qedsymbol

\section{Proofs of the applications to discrete geometry}
In this section we prove Corollary \ref{cor:triples}, and use this in turn to derive Corollary \ref{distances}. We also indicate the changes to the proof of Corollary \ref{cor:triples} that can be made to prove Corollary \ref{cor:triples2}. 

\begin{proof}[Proof of Corollary \ref{cor:triples}]
After translating and dilating our original configuration, we may assume without loss of generality that
\[
p_1=(0,0),\,\,\, p_2=(1,a),\,\,\, p_3=(b,c).
\]
Our initial assumption that these three points are not collinear is now equivalent to the statement $ab \neq c$. Recall that $R_i$ is the set of the radii of the circles in $C_i$ and that $R_i^2 := \{r^2: r\in R_i\}$ is the set of the squares of the radii.

For a point $(u,v) \in \mathbb R^2$, treating $u$ and $v$ as variables, we define $X$, $Y$ and $Z$ to be the square of the distances  between the point $(u,v)$ and respectively $p_1,p_2$ and $p_3$. That is
\begin{align} \label{circlesystem}
X&=u^2+v^2 \nonumber
\\ Y&=(u-1)^2 + (v-a)^2 \nonumber
\\ Z &= (u-b)^2 + (v-c)^2.
\end{align}
We view this as a system of equations with 5 variables $(u,v,X,Y,Z)$. A triple point for the families of circles $\mathcal C_1$, $\mathcal C_2$ and $\mathcal C_3$ corresponds to a solution $(u,v,X,Y,Z) \in \mathbb R \times \mathbb R \times R_1^2 \times R_2^2 \times R_3^2$.

We can eliminate $u$ and $v$ from the system \eqref{circlesystem} to deduce that any triple intersection point gives us a solution to
\begin{equation} \label{defn}
F(X,Y,Z)=0, \,\,\, (X,Y,Z) \in R_1^2 \times R_2^2 \times R_3^2
\end{equation}
for a quadratic polynomial $F$. That is,
\begin{equation} \label{setup}
|T(\mathcal C_1,\mathcal C_2, \mathcal C_3)| = |Z(F) \cap R_1^2 \times R_2^2 \times R_3^2|.
\end{equation}
We use SAGE to help with calculating the explicit form of $F$, which is
\begin{align*} 
F(X,Y,Z)&=X^2(a^2-2ac+b^2-2b+c^2+1)  + X Y(2ac - 2b^2 +2 b - 2 c^{2} ) \\&+XZ(-2a^2+2ac+2b-2)+ Y^{2} (b^{2} +  c^{2}) + Y Z(-2 a c - 2 b)
\\&+ X( - 2 a^{3} c - 2  a^{2} b + 4  a^{2} c^{2} - 2  a b^{2} c + 8  a b c - 2  a c^{3} - 2  a c - 2  b^{3} +  4 b^{2} - 2  b c^{2} - 2  b) 
 \\&+ Y (-2a^{2} b^{2} - 2  a^{2} c^{2} + 2  a b^{2} c + 2  a c^{3} + 2  b^{3} - 2  b^{2} + 2  b c^{2} - 2  c^{2})
 \\&+ Z^{2}( a^{2} + 1) +  Z(2 a^{3} c - 2  a^{2} b^{2} + 2  a^{2} b - 2  a^{2} c^{2} + 2  a c - 2 b^{2} + 2  b - 2  c^{2})
\\& + a^{4} b^{2} + a^{4} c^{2} - 2 a^{3} b^{2} c - 2 a^{3} c^{3} + a^{2} b^{4} - 2 a^{2} b^{3} + 2 a^{2} b^{2} c^{2} + 2 a^{2} b^{2} - 2 a^{2} b c^{2} + a^{2} c^{4}  \\& + 2 a^{2} c^{2} - 2 a b^{2} c - 2 a c^{3} + b^{4} - 2 b^{3} + 2 b^{2} c^{2} + b^{2} - 2 b c^{2} + c^{4} + c^{2}.
\end{align*}

Let $F(X,Y,Z) = (a^2 + 1) G(X,Y,Z)$. Note that, as a univariate polynomial in $Z$ over the polynomial ring $\mathbb{Q}[X,Y]$, $G$ is now monic, and so $L_G = 0$. 

We can express $G(X,Y,Z)$ in the form
\[
G(X,Y,Z) = (q(X,Y,Z))^2 - p(X,Y)
\]

where 
\[
q(X,Y,Z) =Z + \frac{\left(-a^{2} + a c + b - 1\right) X + \left(-a c - b\right) Y - a^{2} b^{2} + a^{3} c - a^{2} c^{2} + a^{2} b - b^{2} + a c - c^{2} + b}{a^2+1}
\] 
and 

\[
p(X,Y) = -\left(\frac{ab-c}{a^2+1}\right)^2 \left( Y^{2} - 2 \left(a^2+1+X \right)Y + (a^{2} +1  - X )^{2}\right).
\]

Note that the coefficient of $Y^2$ in $p$ is $-\left(\frac{ab-c}{a^2+1}\right)^2\in \mathbb{Q}$. Since $ab\neq c$, we can conclude that $\deg_y(p) = 2$.

 For fixed $x$, $p(x,Y)$ is a quadratic equation with discriminant $\Delta= 16x(a^2+1)$. The roots of this polynomial are \[(a^2+1+x)\pm 2\sqrt{x(a^2+1)}.\]
In particular $p(x,Y)$ has a repeated root if and only if $x=0$. Since $0 \notin R_1^2$, the requirement that $p(x,Y)$ does not have any repeated roots for all $x \in R_1^2$ is satisfied.

The next part of the analysis is needed to give control of the size of $M_{R_1^2,p}$. Let $x_1$ and $x_2$ be two fixed values. We see that $p(x_1,Y)$ and $p(x_2,Y)$ have a common root if and only if one of the four variants of the equation $\sqrt{x_1} \pm \sqrt{x_2} = \pm 2\sqrt{a^2+1}$ holds. This implies that for each $r \in R_1^2$, there exist at most four possible choices of $r' \in R_1^2$ such that $p(r,Y)$ and $p(r',Y)$ share a common root. Thus, recalling that $s=3$, we have
\[
|M_{R_1^2,p}| \leq 4 \binom{3}{2}|R_1|^{2}.
\]
We can finally apply Theorem \ref{thm:main} with $s=3$, which gives
\begin{equation} \label{above}
|Z(G) \cap R_1^2 \times R_2^2 \times R_3^2| \ll |\mathcal C_1||\mathcal C_2|^{2/3}+|\mathcal C_1|^{2/3}|\mathcal C_2|.
\end{equation}
Since $Z(G)=Z(F)$, the proof is complete by combining \eqref{above} with \eqref{setup}.
\end{proof}

Corollary \ref{cor:triples2} is proved in much the same way, the differences being that in equation \eqref{circlesystem} we may replace $X$ with $X^2$, $Y$ with $Y^2$ and $Z$ with $Z^2$. We thus consider solutions to $q(X^2,Y^2,Z^2)^2 = p(X^2,Y^2)$. This has the effect of changing $\deg_Y(p((X,Y))$ to four, so that we may apply Theorem \ref{thm:main} with $s = 2$. With minor changes to the analysis of the common roots, we find that $|M_{R_1,p}| \ll |R_1|$, and we conclude that 
$$|Z(G) \cap R_1 \times R_2 \times R_3| \ll |\mathcal C_1||\mathcal C_2|^{1/2}+|\mathcal C_1|^{1/2}|\mathcal C_2|.$$

\begin{proof}[Proof of Corollary \ref{distances}]
Consider a fixed point set $P \subset \mathbb{Q}^2$ and two fixed points $p_1, p_2 \in \mathbb Q^2$. Choose $p_3 \in \mathbb Q^2$ to be any point which is not on the line through $p_1$ and $p_2$. Define $\mathcal C_i$ to be the set of all circles centred at $p_i$ which cover $P$. Note that $|\mathcal C_i|=|D(p_i,P)|$. Observe first that
\[
|P| \leq T(\mathcal C_1, \mathcal C_2, \mathcal C_3),
\]
since each point of $P$ is a triple intersection point for these three families of circles. On the other hand, since all of the points under consideration have rational coordinates, the sets $D(p_i,P)^2$ are contained in $\mathbb Q$. Therefore, Corollary \ref{cor:triples} can be applied to this system to obtain the upper bound
\[
T(\mathcal C_1, \mathcal C_2, \mathcal C_3) \ll |D(p_1,P)||D(p_2,P)|^{2/3}+|D(p_1,P)|^{2/3}|D(p_2,P)| .
\]
Comparing the upper and lower bounds completes the proof of \eqref{general}.
\end{proof}

In proving Corollary~\ref{distances}, we appeal to Corollary~\ref{cor:triples2}, which treats families of circles centred at \emph{three} points whereas Corollary~\ref{distances} treats families of circles centred at \emph{two} points. This may suggest to the reader that Corollary~\ref{cor:triples2} is unnecessary in the proof. This is indeed the case, and the same conclusion follows directly from Theorem~\ref{thm:main}. However, this then requires some somewhat lengthy calculations, essentially modifications of the proof of Corolary \ref{cor:triples}, which we avoid in this presentation of the proof.

We finally prove Corollary \ref{rationalintersections}.

\begin{proof}[Proof of Corollary \ref{rationalintersections}]
As in the statement of Corollary \ref{rationalintersections}, let $\mathcal{C}_1$ and $\mathcal{C}_2$ be two sets of $N$ concentric circles, centred at the rational points $p_1$ and $p_2$ respectively. Let $P \subset \mathcal{C}_1 \cap \mathcal{C}_2$ be the set of rational points lying on the set of points determined by the intersection of circles in $\mathcal C_1$ and circles in $\mathcal{C}_2$. By Corollary \ref{distances}, we have $$|D(p_1,P)|^{2/3}|D(p_2,P)| + |D(p_1,P)||D(p_2,P)|^{2/3} \gg |P|.$$
Since (trivially) $|D(p_1,P)|,|D(p_2,P)|\leq N$, we conclude that $|P| \ll N^{5/3}$.
\end{proof}
\section{Proofs of the applications to arithmetic combinatorics}

\subsection{Expanders}

In this subsection we prove Corollary \ref{Cor:expanders}. For convenience, we restate Corollary \ref{Cor:expanders}.
\begin{corollary*}
Let $h(x)$ and $g(x,y)$ be rational polynomials of degree at most $d$, such that $\frac{\partial g}{\partial y},\frac{d h}{d x}  \neq 0$. Let $A,B \subseteq \Q$ be finite sets with $2d+1 \leq |A| \leq |B|$, and suppose that $t - h(x)$ has no repeated roots for any $t \in \{ h(a) + g(a,b)^2 : a\in A, b \in B \}$. Then, assuming the Uniformity Conjecture, we have
$$\left|  \left\{h(a) + g(a,b)^2 : a \in A, b\in B  \right\}\right| \gg_d |B||A|^{1/s}$$
where $s$ is the smallest integer such that $s \deg(h) \geq 5$.
\end{corollary*}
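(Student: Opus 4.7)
The plan is to apply Theorem~\ref{thm:main} to a carefully chosen polynomial $F$ and the triple of sets $(\tilde A, \tilde B, \tilde C) := (S, A, B)$, where $S := \{h(a) + g(a,b)^2 : a \in A, b \in B\}$ is the image set we want to bound from below. The key observation is the choice of variable labelling, which must be made so that $d_p = \deg_y(p)$ equals $\deg(h)$ rather than $1$. Rewriting $h(a) + g(a,b)^2 = t$ as $g(a,b)^2 = t - h(a)$ and assigning $(x,y,z) = (t, a, b)$, I set
\[
F(x,y,z) := g(y,z)^2 - (x - h(y)),
\]
which has the form $q(x,y,z)^2 - p(x,y)$ with $q(x,y,z) := g(y,z)$ and $p(x,y) := x - h(y)$; crucially $d_p = \deg(h)$, matching the $s$ that appears in the conclusion.

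Every pair $(a,b) \in A \times B$ yields a zero $(h(a) + g(a,b)^2, a, b)$ of $F$ lying in $S \times A \times B$, so $|A||B| \leq |Z(F) \cap S \times A \times B|$. Applying Theorem~\ref{thm:main} will give
\[
|A||B| \ll_{s,d} |S||A|^{1-1/s} + |S|^{1-1/s}|A| + |M_{S,p}|^{1/s}|A| + L_F|B|.
\]
The hypotheses are readily checked: $\tfrac{\partial F}{\partial z} = 2\,g(y,z)\,\tfrac{\partial g}{\partial y}(y,z)\not\equiv 0$ by assumption, and the requirement that $p(t,y) = t - h(y)$ have no repeated roots for $t \in S$ is exactly the hypothesis of the corollary.

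The crux will be showing the last two terms are negligible. For $M_{S,p}$: the polynomials $t_i - h(y)$ and $t_j - h(y)$ can share a common root in $\C$ only when $t_i = t_j$, so no $s$-tuple of distinct elements contributes, giving $M_{S,p} = \emptyset$. For $L_F$: the polynomial $F(t,a,z) = g(a,z)^2 + h(a) - t$ vanishes identically in $z$ only when $g(a,z)$ is constant in $z$; the leading coefficient of $g(x,z)$ viewed as a polynomial in $z$ is a nonzero polynomial in $x$ of degree at most $d$, so it vanishes for at most $d$ values of $a$, and for each such $a$ there is at most one $t$ with $F(t,a,z)\equiv 0$, giving $L_F \leq d$. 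The hypothesis $|A| \geq 2d+1$ then absorbs $L_F |B| \leq d|B| \leq \tfrac{1}{2}|A||B|$ into the left-hand side, leaving
\[
|A||B| \ll_{s,d} |S||A|^{1-1/s} + |S|^{1-1/s}|A|.
\]
A short case analysis on which of the two terms dominates then yields $|S| \gg_{s,d} |B||A|^{1/s}$ in either case (in the subcase where the second term dominates, one uses $|B| \geq |A|$ together with $s/(s-1) \geq 1/s + 1$).

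The main obstacle is really the initial identification of the correct variable labelling: the naive choice, putting $h$ inside $q$, would force $d_p = 1$ and hence $s = 5$ regardless of $\deg(h)$, losing the dependence on $\deg(h)$ that the statement advertises. Once $h$ is placed inside $p$ so that $d_p = \deg(h)$, the remainder is hypothesis-checking and controlling the degenerate terms $M_{S,p}$ and $L_F$, both of which turn out to be benign.
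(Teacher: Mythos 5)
Your approach is essentially identical to the paper's: the same polynomial $F$, the same identification $q(x,y,z) = g(y,z)$ and $p(x,y) = x - h(y)$ (the crucial step of keeping $h$ inside $p$ so that $d_p = \deg h$), the same observations that $M_{S,p} = \emptyset$ and $L_F \leq d$, and the same trivial lower bound $|A||B| \leq |Z(F)\cap S\times A\times B|$. Your handling of the case where the second term $|S|^{1-1/s}|A|$ dominates is actually a little cleaner than the paper's (a direct inequality rather than deducing $|B|\ll 1$), and both work.

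However, the ``absorption'' of the $L_F|B|$ term is not correct as written. The inequality reads
\[
|A||B| \leq K\bigl(|S||A|^{1-1/s} + |S|^{1-1/s}|A| + L_F|B|\bigr)
\]
for some implicit constant $K = K(s,d)$, and while $L_F|B| \leq d|B| \leq \tfrac12 |A||B|$ (from $|A|\geq 2d+1$), this only gives $K L_F|B| \leq \tfrac{K}{2}|A||B|$, which cannot be moved to the left unless $K\leq 1$. The paper resolves this with a third branch of the case analysis: if $L_F|B|$ is the dominant term then $|A||B| \ll_{s,d} |B|$, so $|A| \ll_{s,d} 1$; one then picks $a_0 \in A$ with $h(a_0)+g(a_0,\cdot)^2 \not\equiv 0$ (which exists precisely because $|A|\geq 2d+1 > |\{a: h(a)+g(a,\cdot)^2\equiv 0\}|$), and since $b\mapsto h(a_0)+g(a_0,b)^2$ is at-most-$2d$-to-one, $|S| \gg_d |B| \gg_{s,d} |B||A|^{1/s}$. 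This is where the hypothesis $|A|\geq 2d+1$ is really used -- not to absorb $L_F|B|$, but to guarantee the existence of a non-degenerate $a_0$ when $|A|$ is bounded.
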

\begin{proof}
Define 
$$E := \left\{h(a) + g(a,b)^2 : a \in A, b\in B  \right\}. $$
We shall apply Theorem \ref{thm:main} to the polynomial $F(u,v,w) := h(v) - u + g(v,w)^2$
to find a bound on $|Z(F) \cap E \times A \times B|.$

In our application we have $q(u,v,w) = g(v,w)$, and $p(u,v) = u - h(v)$. 

Note that $|M_{E,p}|=0$. Indeed, let 
$e,e' \in E$ and consider $p(e,v)$ and $p(e',v)$ as univariate polynomials in $v$. If $v_0$ is a common root of both, then $e-h(v_0)=0=e'-h(v_0)$, which implies that $e=e'$. By assumption, we also have that $p(e,v)= e - h(v)$ has no repeated roots for $e\in E$.

We claim that $L_F \leq d$. Indeed, by writing $g(v,w)$ as a univariate polynomial in $w$, we have
$$F(u,v,w) = \left( \sum_{i=0}^k g_i(v)w^{i} \right)^2 + h(v) - u$$
for some $k \leq d$. 
In particular, considering $F$ momentarily as a univariate polynomial in $w$, we see that the coefficient of its leading term is a univariate polynomial in $v$ -- this coefficient is $g_k(v)^2$. 
We have that $\deg(g_k) \leq d$. Suppose $(a,b) \in A \times B$ is such that $F(a,b,w) \equiv 0$. We see that $b$ is a zero of $g_k$, and so only $d$ values of $b$ are possible. 
Moreover, for a fixed $b$ there is at most one $a \in A$ with $F(a,b,w) \equiv 0$, since the constant term is linear in $u$, and must be chosen as $a = g_0(b)^2 + h(b)$. Thus $L_F \leq d$. 

Applying Theorem \ref{thm:main} we have
$$|Z(F) \cap E \times A \times B| \ll_{d,s} |E||A|^{1-1/s} + |A||E|^{1-1/s} + |B|. $$
We also have the lower bound $|Z(F) \cap E \times A \times B| \geq |A||B|$, since for any pair $(a,b) \in A \times B$, the corresponding element $e:= h(a) + g(a,b)^2 \in E$ gives a zero $F(e,a,b)$. This yields
\begin{equation}\label{graphbound}
|A||B| \ll_{d,s} |E||A|^{1 - 1/s} + |A||E|^{1-1/s}+ |B|.
\end{equation}
If the first term above dominates, we are done. If the second term dominates, by comparing first and second terms we find $|E| \ll |A|$, and we also have $|B|^{\frac{s}{s-1}} \ll |E|$, so that
$$|B|^{\frac{s}{s-1}} \ll |E| \ll |A| \leq |B|$$
and we find that $|B| \ll 1$, and there is nothing to prove.

If the third term dominates we have $|A| \ll 1$. For a fixed $a \in A$, we consider the univariate polynomial $h(a) + g(a,y)^2$, which has degree at most $2d$. Let $R:= \{ a\in A : h(a) + g(a,y)^2 \equiv 0\}$ and note that $|R| \leq 2d$. Let $a_0 \in A \setminus R $ (note that $A\setminus R\neq \emptyset$ since $|A| \geq 2d+1$). For any $c \in \R$, the number of solutions to $h(a_0) + g(a_0,y)^2 = c$ is at most $2d$, so we have
$$|\{ h(a_0) + g(a_0,b)^2 : b \in B \}| \geq \frac{|B|}{2d} \gg_d |B| \gg |A||B|,$$
which is stronger than needed.
\end{proof}
Corollary \ref{Cor:expanders} gives information about sums of $(2k)$th powers; setting $h(x) = x^{2k}$ and $g(x,y) = y^k$ we find that for all $A$, $B \subseteq \Q$,
$$|A^{2k} + B^{2k}| \gg |B||A|^{1/s}$$
with $s = \lceil \frac{5}{2k} \rceil$. Here, $A^k$ denotes the set $\{a^k : a \in A\}$. When $k \geq 3$ this gives the optimal bound of $|A||B|$, and for fourth powers we find a bound of $|B||A|^{1/2}$. In the case $k=2$, we find that for $A, B \subseteq \Q$ finite sets of rational squares, we have $|A+B| \gg |A||B|^{1/3}$. Similar bounds for this asymmetric sums of squares problem were obtained in \cite{CillerueloGranville} and \cite{ShkredovSolymosi}.
\subsection{Bounding squares in sets with small doubling}

Recall that Corollary \ref{Cor:Rudin} states that if $A \subset \mathbb Q$ satisfies $|A+A| \leq K|A|$ then $A$ contains $O(K^{3/4}|A|^{3/4})$ squares.

\begin{proof}[Proof of Corollary \ref{Cor:Rudin}]
The proof of Corollary \ref{Cor:Rudin} is very simple, and uses only the result of Cilleruelo and Granville that for a set of squares $B$, we have $|B+B| \gg |B|^{4/3}$. Let $A \subseteq \Q$ be a set satisfying $|A+A| \leq K|A|$, and let $B \subseteq A$ be the set of rational squares contained in $A$. Then we have
$$|B|^{4/3} \ll |B+B| \leq |A+A| \leq K |A|$$
and the result follows.
\end{proof}

Note that we can obtain a small improvement to Corollary \ref{Cor:Rudin} by using the improved lower bounds from \cite{ShkredovSolymosi} for $|A+A|$ when $A$ is a set of squares. Moreover, this argument shows that an optimal lower bound $|A+A| \geq |A|^{2-\epsilon}$ for sets of squares would imply an optimal upper bound $|A|^{\frac{1}{2}+\epsilon'}$ for the number of squares contained in an arithmetic progression $A$, even giving a generalisation to sets with small sum set.

\subsection{Additive configurations in sets of squares}
In this section we prove Corollary \ref{3APs}.

\begin{proof}[Proof of Corollary \ref{3APs}]

Recall that we wish to show that $AP_3(A)\ll |A|^{5/3}$ for $A$ a set of rational squares.
  Let $A \subseteq \Q$ be a finite set of rational squares, and let $B = A^{1/2}$ be the set of positive roots of elements of $A$. We shall apply Theorem \ref{thm:main} to the polynomial $F(x,y,z) = z^{2} - \frac{1}{2}(x^{2} + y^{2})$ and the set $B \times B \times B$. We may remove $0$ from $B$ if it is present, since there are at most $O(|B|) = O(|A|)$ solutions to $F(x,y,z) = 0$ with one of $x,y,z = 0$. We have $q(x,y,z) = z$, and $p(x,y) = \frac{1}{2}(x^{2} + y^{2})$. We see that we have $\frac{\partial F}{\partial z} \neq 0$. Furthermore, since $0 \notin B$, we know that for any $b \in B$, the polynomial $p(b,y)$ does not have repeating roots. This shows that an application of Theorem \ref{thm:main} is valid here. 

  For distinct $b,b' \in B$, the univariate polynomials $p(b,y)$ and $p(b',y)$ have distinct roots, so that $|M_{A,p}| = 0$. Moreover, since $F$ is monic in $z$ we have $L_F = 0$.
  Applying Theorem \ref{thm:main} to $B \times B \times B$ gives
$$|Z(F) \cap B \times B \times B| \ll |B|^{5/3}$$
 Note that a zero $(a,b,c) \in B\times B \times B$ of $F$ corresponds to a solution to the equation $a^{2} + b^{2} = 2c^{2}$, which in turn gives a solution to $\alpha + \beta = 2\gamma$, with $\alpha, \beta, \gamma \in A$. Such solutions correspond to three term arithmetic progressions in $A$, so that we have
$$AP_3(A) = |Z(F) \cap B \times B \times B|  \ll |B|^{5/3} = |A|^{5/3}$$
as required.
\end{proof}

Theorem \ref{thm:main} can also be used to reprove other results concerning additive structure of squares. We can give an alternative proof for the following result, which is implicit from the work of Alon et al. \cite{Alon}. 
\begin{corollary} \label{decomposition2} Assume the Uniformity Conjecture. If $C \subset \mathbb Q$ is a finite set of squares and $C=A+B$ for some $A,B \subset \mathbb Q$ then either $|A|=O(1)$ or $|B|=O(1)$.
\end{corollary}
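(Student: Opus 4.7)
The plan is to apply Theorem \ref{thm:main} to the polynomial
\[
F(x,y,z) := z^2 - x - y = q(x,y,z)^2 - p(x,y),
\]
with $q(x,y,z) = z$ and $p(x,y) = x+y$, on the sets $A$, $B$, and $D := \{\sqrt{c} : c \in C\}$, where $\sqrt{c}$ denotes the non-negative rational square root of $c$ (which exists since $C$ consists of rational squares).

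First I would verify the hypotheses and bookkeeping quantities for this choice. Clearly $\partial F/\partial z = 2z \not\equiv 0$. We have $d_p = \deg_y(p) = 1$, so the smallest $s$ with $s d_p \geq 5$ is $s = 5$. For each $a \in A$, the polynomial $p(a,y) = a+y$ is linear in $y$ and so has no repeated roots. The set $M_{A,p}$ is empty: if $a_i \neq a_j$, then $p(a_i,y) = a_i + y$ and $p(a_j,y) = a_j + y$ have distinct roots $-a_i \neq -a_j$, so no common root exists. Finally $L_F = 0$, since $F(a,b,z) = z^2 - (a+b)$ is a nontrivial polynomial in $z$ for every $(a,b)$. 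Theorem \ref{thm:main} therefore gives
\[
|Z(F) \cap A \times B \times D| \ll |A||B|^{4/5} + |A|^{4/5}|B|.
\]

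On the other hand, since $C = A+B$, for every pair $(a,b) \in A \times B$ we have $a+b \in C$, and hence $\sqrt{a+b} \in D$; the triple $(a,b,\sqrt{a+b})$ lies in $Z(F) \cap A \times B \times D$. This yields the lower bound $|Z(F) \cap A \times B \times D| \geq |A||B|$. Combining with the upper bound we obtain
\[
|A||B| \ll |A||B|^{4/5} + |A|^{4/5}|B|,
\]
so at least one of the terms on the right dominates, giving either $|B|^{1/5} \ll 1$ or $|A|^{1/5} \ll 1$. In either case, $\min(|A|,|B|) = O(1)$, as required.

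There is no serious obstacle here: all the structural quantities attached to $F$ (the set $M_{A,p}$, the count $L_F$, the repeated root condition) vanish or trivialise because $p(x,y) = x+y$ is linear in $y$, and the lower bound comes for free from the very definition of $A+B$. The worst regime $d_p = 1$ of Theorem \ref{thm:main} (with exponent $9/5$ in the balanced case) is already sharp enough to force a non-trivial rigidity conclusion, namely that the sum set $A+B$ can only be a set of squares if the addition is degenerate.
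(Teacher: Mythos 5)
Your proposal is correct and follows essentially the same route as the paper: the paper proves the statement via its Corollary 14 (applying Theorem 3 to $F(x,y,z)=z^2-x-y$ on $A\times B\times C^{1/2}$ with $s=5$, $M_{A,p}=\emptyset$, $L_F=0$, then comparing with the trivial lower bound $|A||B|$), and you have simply inlined that intermediate step rather than stating it separately. All the bookkeeping (the repeated-root condition, $M_{A,p}$, $L_F$) is checked exactly as in the paper, so there is nothing to add beyond noting that the paper also records a second, more elementary proof due to Alon et al.\ using a single auxiliary hyperelliptic curve in five fixed elements of $A$, which you did not pursue.
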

This is a consequence of the following result, which is essentially \cite[Theorem 14]{ShkredovSolymosi}. 
\begin{corollary}\label{decomposition}
Assume the Uniformity Conjecture, and let $C \subset \Q$ be a set of squares. Then for any finite non-empty $A,B \subseteq \Q$, we have
$$| \{ (a,b) \in A \times B : a + b \in C \} | \ll |A||B|^{4/5} + |B| |A|^{4/5}.$$
\end{corollary}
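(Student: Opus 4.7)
The plan is to apply Theorem \ref{thm:main} directly to the polynomial
\[
F(x,y,z) = z^2 - (x+y),
\]
which has the required shape $q(x,y,z)^2 - p(x,y)$ with $q(x,y,z)=z$ and $p(x,y)=x+y$. The three sets fed into the theorem will be $A$, $B$ (from the statement of Corollary \ref{decomposition}) and $D$, where $D := \{d \in \Q_{\geq 0} : d^2 \in C\}$ is the set of non-negative square roots of $C$. Since each pair $(a,b)\in A\times B$ with $a+b\in C$ corresponds to exactly one triple $(a,b,d)\in A\times B\times D$ with $F(a,b,d)=0$ (namely $d = \sqrt{a+b}$), we have
\[
|\{(a,b)\in A\times B : a+b\in C\}| \;=\; |Z(F)\cap (A\times B\times D)|,
\]
so a bound on the right side will immediately give the corollary.

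Next I would verify the hypotheses of Theorem \ref{thm:main}. We have $\partial F/\partial z = 2z \not\equiv 0$, and $p(a,y) = a+y$ is linear in $y$, so it trivially has no repeated roots for every $a\in A$. The parameter $d_p = \deg_y(p) = 1$, so the smallest integer $s$ with $sd_p\geq 5$ is $s=5$. Crucially, the two ``error'' terms in Theorem \ref{thm:main} both vanish for this $F$. First, $|M_{A,p}|=0$: for distinct $a_i\neq a_j$, the linear polynomials $a_i+y$ and $a_j+y$ have the distinct roots $-a_i$ and $-a_j$, and therefore share no common root. Second, $L_F = 0$: the polynomial $F(a,b,z) = z^2-(a+b)$ is monic of degree $2$ in $z$ for every fixed $(a,b)\in A\times B$, so it is never identically zero.

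Plugging these inputs ($s=5$, $|M_{A,p}|=0$, $L_F=0$) into the main bound \eqref{long} of Theorem \ref{thm:main} yields
\[
|Z(F)\cap (A\times B\times D)| \;\ll\; |A||B|^{4/5} + |A|^{4/5}|B|,
\]
and combining with the identification above completes the proof.

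I do not expect a serious obstacle here: the content is entirely in choosing the right $F$ so that the hyperelliptic-curve machinery of Theorem \ref{thm:main} applies and so that both auxiliary quantities $M_{A,p}$ and $L_F$ are trivially zero. The only minor bookkeeping step is to handle pairs with $a+b=0$, but these cause no issue because either $0\in C$ (and then $0\in D$ contributes the unique triple $(a,b,0)$) or $0\notin C$ (and then such pairs do not contribute to the quantity being counted), so in either case the equality $|\{(a,b) : a+b\in C\}| = |Z(F)\cap (A\times B\times D)|$ is maintained.
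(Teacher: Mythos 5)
Your proposal is correct and follows essentially the same route as the paper's proof: both apply Theorem~\ref{thm:main} to $F(x,y,z)=z^2-x-y$ with $q(x,y,z)=z$ and $p(x,y)=x+y$, take the third coordinate set to be the (non-negative) square roots of $C$, observe $|M_{A,p}|=L_F=0$ and $s=5$, and conclude via the bijection $(a,b)\mapsto(a,b,\sqrt{a+b})$. The extra remark about $a+b=0$ is a harmless bit of bookkeeping that the paper omits.
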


\begin{proof}[Proof of Corollary \ref{decomposition}]
 Let $C \subset \Q$ be a set of squares, and suppose that $A,B \subset \Q$. We shall apply Theorem \ref{thm:main} to the polynomial $F(x,y,z) = z^2 - x - y$, and the sets $A$, $B$, and $C^{1/2}$, where we take only the positive roots. We have $q(x,y,z) = z$, and $p(x,y) = x+y$. We see that $p(a,y)$ has no repeated roots, and furthermore that $|M_{A,p}| = L_F = 0$. Applying Theorem \ref{thm:main} with $s = 5$ then gives 
 $$|Z(F) \cap A \times B \times C^{1/2}| \ll |A||B|^{4/5} + |B||A|^{4/5}.$$
 Now consider the set 
 $$S = \{ (a,b) \in A \times B : a+b \in C\}.$$
 Every pair $(a,b)$ in $S$ gives an element $(a,b,(a+b)^{1/2})$ in $ A \times B \times C^{1/2} \cap Z(F)$. Therefore we have 
 $$|S| = |Z(F) \cap A \times B \times C^{1/2}| \ll |A||B|^{4/5} + |B||A|^{4/5},$$
giving the result.
\end{proof} 
Corollary \ref{decomposition} now follows immediately. Indeed let $C$ be a finite set of squares and suppose that $C=A+B$ for some $A,B \subset \mathbb Q$. Then by Corollary \ref{decomposition},
\[
|A||B|=|\{(a,b) \in A \times B : a+b \in C \}| \ll |A||B|^{4/5}+|A|^{4/5}|B|.
\]
This inequality implies that either $|A| \ll 1$ or $|B| \ll 1$ must hold.

As mentioned above, Corollary \ref{decomposition} is implicit in the work of Alon et al. \cite{Alon}. Their alternative proof is as follows: Suppose that $C=A+B$ and assume that $|A| \geq 5$. For any five distinct elements $a_1,\dots,a_5 \in A$, consider the auxiliary polynomial
\[
Y^2=(X+a_1)(X+a_2)(X+a_3)(X+a_4)(X+a_5).
\]
By the Uniformity Conjecture, this has $O(1)$ rational solutions. But each $b \in B$ gives the rational solution $(b, \sqrt {(b+a_1)(b+a_2)(b+a_3)(b+a_4)(b+a_5)})$, and so $|B| \ll 1$.

\subsection{Bounds on Energies}
In this section we prove Corollary \ref{energybounds}, bounding $\E_l(A,B)$ for $A$ a set of rational squares, and $B$ a set of rational $k$th powers, for $k \geq 5$. 

\begin{proof}

We will actually prove the following stronger $l_{\infty}$ bound for the representation function $r_{A-B}$: 
\begin{equation} \label{linfty}
\forall m \neq 0, r_{A-B}(m) \ll_k 1.
\end{equation}
Since $a$ is a square and $b$ is a $k$th power, for each representation in $A - B$ of $m$ we find the rational point $(b^{1/k}, a^{1/2})$ lying on the curve $m = Y^2 - X^k$, which has genus $\lfloor \frac{k-1}{2} \rfloor \geq 2$. Therefore by uniformity, for each $m \in A-B$ there are at most $B_{\lfloor \frac{k-1}{2} \rfloor}$ pairs $(a,b)$ with $m = a-b$, implying that $r_{A-B}(m) \ll_k 1$. We now have
$$\E_l(A,B) = \sum_{m} r_{A - B}(m)^l = \sum_{m \neq 0} r_{A - B}(m)^l + r_{A - B}(0)^l \ll_{k,l} |A-B| + |A \cap B|^l$$
as needed.
\end{proof}

\subsection{Products and shifts along graphs}
In this section we prove Corollary \ref{cor:sp}. 

\begin{proof}[Proof of Corollary \ref{cor:sp}]

 Write $C=A \cdot_G B$ and $D=(A+\alpha) \cdot_G (B +\beta)$. We double count solutions to the system
\begin{align} \label{system}
    c&=ab \nonumber
    \\ d&=(a+\alpha)(b+\beta)\,\,\,\,\, \text{ such that } (a,b) \in G,\,c \in C, d \in D.
\end{align}
Let $S$ denote the number of solutions to \eqref{system} and note that $S=|E(G)|$. Indeed, each $(a,b) \in G$ determines a unique $c \in C$ and $d \in D$ which solves \eqref{system}.

On the other hand, we now aim to use Theorem \ref{thm:main} with $s=3$ to upper bound $S$. 
Since $0\notin B$, we can eliminate the variable $a$ from the system \eqref{system} and rearrange to see that a solution $(a,b,c,d)$ to $S$ yields a solution $(c,d,b)$ to the polynomial equation $F(x,y,z)=0$, where 
\[F(x,y,z)=\left(\frac{x}{\alpha}+ z\right)(z+\beta) -\frac{zy}{\alpha}.
\]
Therefore, 
\[
S \leq |Z(F) \cap C \times D \times B|.
\]
$F$ can be rearranged as
\[
F(x,y,z)= \left ( z + \frac{1}{2} \left (\frac{x-y}{\alpha} + \beta \right ) \right)^2 - \left(\frac{1}{4}\left (\frac{x-y}{\alpha} + \beta \right )^2 -\frac{\beta x}{\alpha} \right) := q(x,y,z)^2-p(x,y),
\]
so that it takes the form required for the application of Theorem \ref{thm:main}. We now perform some analysis to allow us to bound $|M_{C,p}|$. We can rearrange the above expression for $p$ and see that
\[
p(x,y)=\frac{1}{4\alpha^2}y^2-\frac{x+\alpha\beta}{2\alpha^2}y+\left(\frac{x-\alpha\beta}{2\alpha}\right)^2.
\]
For a fixed value of $x$, $p(x,y)$ is a quadratic equation with discriminant $\Delta=\frac{\beta x}{\alpha^3}$. In particular, $p(x,y)$ has repeated root if and only if either $\beta = 0$ or $x=0$, which cannot happen. With a straightforward calculation we see that the roots of $p(x,y)$ are
\[
(x+\alpha \beta)\pm 2 \sqrt{\beta \alpha x}.
\]

Now fix two values $x_1$ and $x_2$. The roots of $p(x_1,y)$ and $p(x_2,y)$ are, respectively
\[
(x_1+\alpha \beta)\pm 2 \sqrt{\beta \alpha x_1}, \quad (x_2+\alpha \beta)\pm 2 \sqrt{\beta \alpha x_2}.
\]

We see that $p(x_1,y)$ and $p(x_2,y)$ have a root in common if and only if one of the four variants of the equation $\sqrt{x_1} \pm \sqrt{x_2} = \pm2\sqrt{\alpha \beta}$ holds. We can now bound $|M_{C,p}|$. For each $c_i \in C$, there exist at most four possible choices of $c_j \in C$ such that $p(c_i,y)$ and $p(c_j,y)$ share a common root. It then follows that
\[
|M_{C,p}| \leq 4 \binom{3}{2}|C|^{2}.
\]
 Finally, since $F$ is monic in $z$, we have $L_F = 0$. We can now apply Theorem \ref{thm:main} to conclude that
\[
|E(G)|=S \ll |C||D|^{2/3} + |C|^{2/3}|D|,
\]
and thus
\[
\max \{|C|,|D| \} \gg |E(G)|^{3/5},
\]
as required.

\end{proof}

We note that the same argument as above can be used to deduce the bound
\[
\max\{|A+_G B|, |A \cdot_G B| \} \gg |E(G)|^{3/5}
\]
from \cite{ShkredovSolymosi} as a consequence of our Theorem \ref{thm:main}.

\section*{Acknowledgements}

The first listed author was supported by the grant Artin Approximation, Arc-R{\"a}ume, Aufl{\"o}sung von Singularit{\"a}ten FWF P-31336. The second, third, and fourth authors were partially supported by the Austrian Science Fund FWF Project P 30405-N32. We are grateful to Antal Balog, Tom Bloom, Christian Elsholtz, Brandon Hanson, Cosmin Pohoata, Sean Prendiville, Imre Ruzsa, Ilya Shkredov, Jozsef Solymosi, and Dmitrii Zhelezov for helpful conversations.

\Addresses


\begin{thebibliography}{9}
\bibitem{Alon}N. Alon, O. Angel, I. Benjamini, and E. Lubetzky, \emph{Sums and products along sparse graphs}, Israel J. Math. 188, 353–384 (2012)

\bibitem{ARS}N. Alon, I. Z. Ruzsa and J. Solymosi, \emph{Sums, products, and ratios along the edges of graph}, Publ. Mat. 64, no. 1, 143-155 (2020)

\bibitem{BS}A. Balog and E. Szemer\'{e}di, \emph{A statistical theorem of set addition}, Combinatorica 14, 263-268 (1994)


\bibitem{BombieriZannier}E. Bombieri and U. Zannier, \emph{A note on squares in arithmetic progressions. II.} Atti Accad. Naz. Lincei, Cl. Sci. Fis. Mat. Nat., IX. Ser., Rend. Lincei, Mat. Appl.13, No.2, 69-75 (2002)

\bibitem{CaporasoHarrisMazur} L. Caporaso, J. Harris, and B. Mazur, \emph{Uniformity of rational points}, J. Amer. Math. Soc. 10, no. 1, 1–35 (1997)

\bibitem{Chang}M.-C. Chang, \emph{On problems of Erd\H{o}s and Rudin}, J.  Funct.  Anal. 207, 444–460 (2004)

\bibitem{CillerueloGranville}J. Cilleruelo and A. Granville, \emph{Lattice points on circles, squares in arithmetic progressions and sumsets of squares,} CRM Proceedings and Lecture Notes: Additive Combinatorics, AMS and Centre de Recherches Math\'{e}matiques, pp. 241–262 (2007)

\bibitem{DarmonMerel} H. Darmon and L. Merel, \emph{Winding quotients and some variants of Fermat's Last Theorem,} Journal für die reine und angewandte Mathematik, no. 490, 81-100 (1997).

\bibitem{Elekes} G. Elekes, \emph{Circle grids and bipartite graphs of distances}, Combinatorica 15, 167-174 (1995)

\bibitem{Elekes2} G. Elekes, \emph{SUMS versus PRODUCTS in Number Theory, Algebra and Erd\H{o}s Geometry}, Paul Erd\H os and his Mathematics II,
Bolyai Society Mathematical Studies 11,
241--290 (2002)

\bibitem{ElekesRonyai}G. Elekes and L. R\'{o}nyai, \emph{A combinatorial problem on polynomials and rational functions}, J. Combin. Theory Ser. A. 89, 1-20 (2000)

\bibitem{ElekesSzabo}G. Elekes and E. Szab\'o, \emph{How to find groups? (And how to use them in Erd\H{o}s Geometry?)}, Combinatorica 32 (5) (2012) 537--571

\bibitem{ELV}P. Erd\H{o}s, L. Lov\'{a}sz and K. Vesztergombi, \emph{On the graph of large distances}, Discrete Comput. Geom. 4, no. 6, 541-549 (1989)

\bibitem{erdosulam} P. Erd\H{o}s, \emph{Ulam, the man and the mathematician}, J. Graph Theory, 9 (4) (1985) 445–449. Also appears in in Creation Math. 19, 13-16 (1986).

\bibitem{Faltings} G. Faltings, \emph{Erratum: {F}initeness theorems for abelian varieties over number fields}, Invent. Math. 75 (1984), no.~2, 381

\bibitem{PohoataFox} J. Fox and C. Pohoata, \emph{Sets without k-term progressions can have many shorter progressions}, arXiv preprint arXiv:1908.09905 (2019)

\bibitem{Galbraith} 
S. Galbraith, \emph{Mathematics of Public Key Cryptography}. Cambridge University Press (2012)

\bibitem{GS} M. Z. Garaev and C.-Y. Shen, \emph{On the size of the set $A(A+1)$}, Math. Z. 265, no. 1, (2010) 125-132

\bibitem{HRNZ} B. Hanson, O. Roche-Newton and D. Zhelezov, \emph{On iterated product sets with shifts II},  To appear in Algebra Number Theory

\bibitem{vanluijk}R. van Luijk, \emph{On perfect cuboids}, Doctoraalscriptie, Mathematisch Instituut, Universiteit Utrecht, Utrecht (2000)

\bibitem{MRNWdZ} M. Makhul, O. Roche-Newton, A. Warren and F. de Zeeuw, \emph {Constructions for the Elekes-Szab\'{o} and Elekes-R\'{o}nyai problems}, Electron. J. Combin. 27, no. 1, Paper 1.57 (2020)

\bibitem{Nowak}W. G. Nowak and W. Recknagel, \emph{The distribution of pythagorean triples and a three dimensional divisor problem}, Math. J. Okayama Univ. 31, 213-220 (1989)


\bibitem{RazSharirSolymosi} O. E. Raz, M. Sharir, J. Solymosi, \emph{Polynomials vanishing on grids: The Elekes-R\'{o}nyai problem revisited}, Amer. J. Math. 138(4):1029-1065 (2016)

\bibitem{RazSharirdeZeeuw} O. E. Raz, M. Sharir and F. de Zeeuw, \emph{Polynomials vanishing on Cartesian products: The Elekes–Szab\'{o} theorem revisited}, Duke Math. J. 165 (2016), no. 18, 3517--3566

\bibitem{RNW}  O. Roche-Newton and A. Warren, \emph{Improved bounds for pencils of lines},  To appear in Proc. Amer. Math. Soc.

\bibitem{Rudin} W. Rudin, \emph{Trigonometric series with gaps,} Journal of Mathematics and Mechanics. 9, no. 2, 203–227 (1960)

\bibitem{shaffaf} J. Shaffaf, \emph{A solution of the Erd\H{o}s-Ulam problem on rational distance sets assuming the Bombieri–Lang conjecture}, Discrete Comput. Geom., 60 (8) (2018)

\bibitem{SS} M. Sharir and J. Solymosi, \emph{Distinct distances from three points}, Combin. Probab. Comput. 25, no. 4, 623-632 (2016)

\bibitem{Shkredov} 
I. D. Shkredov, \emph{Some remarks on sets with small quotient set}, Mat. Sb. 208, no. 12, 144-158 (2018)

\bibitem{ShkredovSolymosi} 
I. D. Shkredov and J. Solymosi, \emph{The Uniformity Conjecture in additive combinatorics}, arXiv preprint arXiv:2005.11559 (2020)

\bibitem{Solymosi}
J. Solymosi, \emph{Expanding polynomials over the rationals}, arXiv preprint arXiv:1212:3365v1 (2012)

\bibitem{tao}T. Tao, \emph{The Erdos-Ulam problem, varieties of general type, and the Bombieri-Lang conjecture}, blog post, 20 December 2014, \url{https://terrytao.wordpress.com/2014/12/20/the-erdos-ulam-problem-varieties-of-general-type-and-the-bombieri-lang-conjecture/}

\end{thebibliography}
\end{document}